\theoremstyle{definition}
\newtheorem{thm}{Theorem}[section]
\newtheorem*{thm*}{Theorem}
\newtheorem{defn}[thm]{Definition}
\newtheorem{lem}[thm]{Lemma}
\newtheorem{cor}[thm]{Corollary}
\newtheorem{prop}[thm]{Proposition}
\newtheorem{rem}[thm]{Remark}
\newcommand{\D}{\mathbb{D}}
\newcommand{\R}{\mathbb{R}}
\newcommand{\sgn}{\text{sgn}}
\numberwithin{equation}{section}
\colorlet{shadecolor}{blue!5}
\newcounter{claim}
\begin{document}

\title[Action and Winding Asymptotics. ]{Asymptotic Action and Asymptotic Winding Number 
for Area-preserving Diffeomorphisms of the disk.}


\author{David Bechara Senior}
\address{}
\curraddr{}
\email{david.becharasenior@ruhr-uni-bochum.de}
\thanks{
}

\thanks{}
\subjclass[2010]{Dynamical Systems, Symplectic Geometry }

\date{\today}

\dedicatory{}

\begin{abstract}
Given a compactly supported area-preserving diffeomorphism of the disk, we prove an integral formula relating the asymptotic action to the asymptotic winding number. As a corollary, we obtain a new proof of Fathi's integral formula for the Calabi homomorphism on the disk.
\end{abstract}

\maketitle


\section{Introduction.}

Let $\phi: \D \rightarrow \D$ be a  diffeomorphism of the disk $\D\subset \R^2$ compactly supported on the interior and preserving the standard area form $\omega_0 = dx\wedge dy$. If $\lambda$ is any primitive of $\omega_0$, we can define the \textit{action} $a_{\phi,\lambda}(z)$ of a point $z\in \D$ with respect to $\lambda$ as the value at $z$ of the unique primitive of the exact form $\phi^* \lambda - \lambda$ that vanishes near the boundary of $\D$. Equivalently, if one sees $\phi$ as the time-one map of the isotopy $\phi^t$ that is obtained by integrating the vector field coming from a compactly supported time-dependent Hamiltonian $(z,t) \mapsto H_t(z)$, the action has the expression
\[
a_{\phi,\lambda}(z) = \int_{\{t\mapsto \phi^t(z)\}} \lambda + \int_0^1 H_t(\phi^t(z))\, dt.
\]
See Section \ref{AWCs} below for precise definitions and sign conventions. In general, the value $a_{\phi,\lambda}(z)$ depends on the choice of the primitive $\lambda$, but it is independent of $\lambda$ at fixed points of $\phi$. The integral of $a_{\phi,\lambda}$ over the disk, which we denote by
\[
\mathcal{C}(\phi) := \int_\D a_{\lambda,\phi} \,\omega_0,
\]
 is also independent of the choice of $\lambda$. The real valued function $\phi\mapsto \mathcal{C}(\phi)$ is a homomorphism on $\text{Diff}_c(\D, \omega_0)$, the group of  area-preserving diffeomorphisms of $\D$ compactly supported on the interior and is named \textit{Calabi homomorphism} after \cite{Cal}.
 
 In this note, we define the \textit{asymptotic action} of $\phi$ as the limit
 \[
 a_{\phi}^{\infty}(z) := \lim_{n\rightarrow \infty} \frac{a_{\phi^n,\lambda}(z)}{n}.
 \]
Indeed, a simple application of Birkhoff's ergodic theorem implies that the above limit exists for almost every $z\in \D$ and defines a $\phi$-invariant integrable function $a_{\phi}^{\infty}$, whose integral over $\D$ coincides with the integral of $a_{\phi,\lambda}$. In particular, if $z$ is a $k$-periodic point of $\phi$, then the above limit exists and coincides with the average action of the orbit of $z$:
\[
a_{\phi}^{\infty}(z) = \frac{1}{k} \sum_{j=0}^{k-1} a_{\phi,\lambda}(\phi^j(z)).
\]
As the notation suggests, the asymptotic action $a_{\phi}^{\infty}$ is independent of the primitive $\lambda$ and hence can be expected to capture dynamical properties of $\phi$.

If $x$ and $y$ are distinct points of $\D$, the \textit{winding number} $W_{\phi}(x,y)$ is defined as the winding number of the curve
\[
[0,1] \mapsto \mathbb{S}^1, \qquad t\mapsto \frac{\phi^t(y)-\phi^t(x)}{\|\phi^t(y)-\phi^t(x)\|} = e^{i\theta(t)},
\]
i.e. 
\[
W_{\phi}(x,y):= \frac{\theta(1)-\theta(0)}{2\pi}.
\]
In the definition, we have used a Hamiltonian isotopy $\phi^t$ joining the identity to $\phi$, but the fact that the space $\text{Diff}_c(\D, \omega_0)$ is contractible implies that the value of $W_{\phi}(x,y)$ does not depend on the choice of the isotopy. The \textit{asymptotic winding number} of the pair $(x,y)$ can now be defined as the ergodic limit
\[
W_{\phi}^{\infty}(x,y) := \lim_{n\rightarrow \infty} \frac{W_{\phi^n}(x,y)}{n},
\]
which again exists for almost every pair $(x,y)$ in $\D\times \D$ and defines an integrable function $W_\phi^{\infty}$ on $\D\times \D$ that is invariant under the action of $\phi\times \phi$ and whose integral over $\D\times \D$ coincides with the integral of $W_{\phi}$.  Moreover, if $x$ is a $k$-periodic point of $\phi$ then the limit defining $W_{\phi}^{\infty}(x,y)$ exists for almost every $y\in \D$.

The main result of this note is the following formula relating the asymptotic action and the asymptotic winding number.

\begin{thm}
\label{main-thm}
If $\phi: \D \rightarrow \D$ is a compactly supported diffeomorphism of the disk $\D$ preserving the standard area form $\omega_0=dx\wedge dy$, then the identity
\[
a_{\phi}^{\infty}(x) = \int_{\D} W_\phi^{\infty} (x,y) \,\omega_0(y)
\]
holds for every $x$ in a subset $\Omega$ of $\D$ that has full measure and contains all periodic points of $\phi$.
\end{thm}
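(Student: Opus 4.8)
The plan is to reduce the statement to a finite-time identity and then pass to the ergodic limit. The finite-time identity I would prove is: for the rotation-invariant primitive $\lambda_0:=\tfrac12(x\,dy-y\,dx)$ of $\omega_0$,
\[
a_{\phi,\lambda_0}(x)=\int_{\mathbb{D}}W_\phi(x,y)\,\omega_0(y)\qquad\text{for every }x\in\mathbb{D}.
\]
The right-hand side makes sense because, for fixed $x$, the function $y\mapsto W_\phi(x,y)$ is bounded: it is smooth away from $y=x$, and near $y=x$ the winding of $\phi^t(y)-\phi^t(x)$ is controlled, uniformly in the direction of $y-x$, by the total turning of the path of linear maps $t\mapsto D\phi^t(x)$. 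Granting the finite-time identity, the theorem will follow from two cocycle relations and Birkhoff's theorem.

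To prove the finite-time identity, set $F(x):=\int_{\mathbb{D}}W_\phi(x,y)\,\omega_0(y)$ and let $\eta_q$ denote the angular $1$-form with pole at $q$, so that $d\arg(\cdot-q)=\eta_q$ on $\mathbb{D}\setminus\{q\}$. Writing $W_\phi(x,y)$ as $\tfrac1{2\pi}$ times the increment of $\arg(\phi^t(y)-\phi^t(x))$ along a continuous branch, a short computation gives $d_xW_\phi(x,y)=\tfrac1{2\pi}\bigl(\phi^*\eta_{\phi(y)}-\eta_y\bigr)\big|_x$; since $|d_xW_\phi(x,y)|$ is of order $|x-y|^{-1}$, locally uniformly in $x$ and integrable in $y$, one may differentiate under the integral sign to obtain $dF(x)=\tfrac1{2\pi}\int_{\mathbb{D}}\bigl(\phi^*\eta_{\phi(y)}-\eta_y\bigr)\big|_x\,\omega_0(y)$. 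The computational core is the potential-theoretic identity
\[
\int_{\mathbb{D}}\eta_q\,\omega_0(q)=2\pi\lambda_0,
\]
which, after evaluating on a tangent vector and substituting $w=x-q$, amounts to $\int_{\mathbb{D}(x,1)}w\,|w|^{-2}\,dA(w)=\pi x$ for $|x|<1$ — exactly the statement that the gradient of the logarithmic potential of the uniform measure on the unit disk equals $\pi x$ at interior points. Combining this with the area-preserving change of variables $q=\phi(y)$ in the first term yields $dF=\phi^*\lambda_0-\lambda_0$. Next, since the isotopy may be taken trivial off a compact $K\subset\operatorname{int}\mathbb{D}$, for $x\notin K$ one has $\phi^t(x)=x$ and $H_t\equiv0$ near $x$, so $F(x)=\tfrac1{2\pi}\int_0^1\!\!\int_{\mathbb{D}}\eta_x\bigl(X_{H_t}(z)\bigr)\,\omega_0(z)\,dt$, and each inner integral equals $\pm\int_{\mathbb{D}}d(H_t\,\eta_x)=0$ by Stokes (note $H_t\eta_x$ is smooth on all of $\mathbb{D}$, the pole of $\eta_x$ lying outside $\operatorname{supp}(H_t)$). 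Hence $F$ vanishes near $\partial\mathbb{D}$, and since $a_{\phi,\lambda_0}$ is by definition the unique primitive of $\phi^*\lambda_0-\lambda_0$ vanishing there, $F=a_{\phi,\lambda_0}$.

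For the asymptotic statement I would use the cocycle identities $a_{\phi^n,\lambda_0}=\sum_{j=0}^{n-1}a_{\phi,\lambda_0}\circ\phi^j$ (uniqueness of primitives) and $W_{\phi^n}(x,y)=\sum_{j=0}^{n-1}W_\phi(\phi^j x,\phi^j y)$ (concatenation of isotopies plus isotopy-independence of the winding number), together with a uniform bound $|W_\phi|\le M$ on $(\mathbb{D}\times\mathbb{D})\setminus\Delta$ (the estimate above made uniform in $x$ by compactness). Let $\Omega$ consist of those $x$ for which $a_\phi^\infty(x)=\lim_n\tfrac1n\sum_{j=0}^{n-1}a_{\phi,\lambda_0}(\phi^j x)$ exists and $W_\phi^\infty(x,y)$ exists for a.e.\ $y$; as recalled in the introduction (and via Fubini where needed) this set has full measure and contains every periodic point. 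For $x\in\Omega$, applying the finite-time identity at each $\phi^j(x)$ and the area-preserving change of variables $y\mapsto\phi^j(y)$ gives
\[
\frac1n\sum_{j=0}^{n-1}a_{\phi,\lambda_0}(\phi^j x)=\frac1n\sum_{j=0}^{n-1}\int_{\mathbb{D}}W_\phi(\phi^j x,\phi^j y)\,\omega_0(y)=\int_{\mathbb{D}}\Bigl(\frac1n\sum_{j=0}^{n-1}W_\phi(\phi^j x,\phi^j y)\Bigr)\omega_0(y);
\]
the integrand equals $\tfrac1n W_{\phi^n}(x,y)$, is bounded by $M$ uniformly in $n$, and converges to $W_\phi^\infty(x,y)$ for a.e.\ $y$, so letting $n\to\infty$ and using dominated convergence on the right yields $a_\phi^\infty(x)=\int_{\mathbb{D}}W_\phi^\infty(x,y)\,\omega_0(y)$, as desired.

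The main obstacle is the finite-time identity: recognizing that the relevant primitive is the rotation-invariant $\lambda_0$, carrying out the differentiation under the integral and the change of variables in the presence of the (integrable, respectively removable) singularities of the angular forms, and establishing $\int_{\mathbb{D}}\eta_q\,\omega_0(q)=2\pi\lambda_0$ via the logarithmic potential. Once this and the uniform bound on $W_\phi$ are available, the passage to the limit is routine ergodic theory.
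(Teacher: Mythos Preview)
Your argument is correct and follows a genuinely different route from the paper's.

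The paper works three-dimensionally: for each $x$ and a boundary point $e\in\partial\D$ it builds a surface $S^e_\phi(x)\subset\D\times[0,1]$ and proves two separate facts --- first, that the intersection number $I^e_\phi(x,y)$ of the orbit $\Gamma_\phi(y)$ with this surface differs from $W_\phi(x,y)$ by at most $3/2$ (Proposition~\ref{Bound1}), and second, that the action satisfies the exact identity $a_{\phi,\lambda}(x)=\int_\D I^e_\phi(x,y)\,\omega_0(y)+\text{(two line-segment integrals of }\lambda)$ (Proposition~\ref{actionvsintersection}, via a degree computation and Stokes). Combining these gives only the approximate relation $a_{\phi^n,\lambda}(x)/n-\tfrac1n\int_\D W_{\phi^n}(x,y)\,\omega_0(y)=O(1/n)$, which is then fed into the ergodic limit.

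You instead prove a sharper, exact finite-time identity: with the rotation-invariant primitive $\lambda_0=\tfrac12(x\,dy-y\,dx)$ one has $a_{\phi,\lambda_0}(x)=\int_\D W_\phi(x,y)\,\omega_0(y)$ for \emph{every} $x$, with no error term. Your key input is the potential-theoretic identity $\int_\D\eta_q\,\omega_0(q)=2\pi\lambda_0$ (equivalently, the gradient of the logarithmic potential of the uniform disk is $\pi x$ inside), together with differentiation under the integral sign and the area-preserving change of variables. This is close in spirit to the complex-analytic proof of Shelukhin cited in the introduction, and bypasses the three-dimensional intersection machinery entirely. What your approach buys is a cleaner finite-time statement --- which, incidentally, yields Fathi's formula (Corollary~\ref{cor:fathi}) immediately upon integrating in $x$, before any ergodic theory is invoked. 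What the paper's approach buys is a geometric picture (action as signed surface area, winding as intersection count) that makes the identity transparent from a topological viewpoint and may transport more readily to settings where a preferred primitive like $\lambda_0$ is unavailable. The passage to the asymptotic statement is essentially the same in both proofs; your use of dominated convergence (via the uniform bound $|W_\phi|\le M$) is a clean substitute for the paper's $O(1/n)$ estimate.
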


By integrating this identity over $\Omega$, and by using the fact that the integrals of the ergodic limits $a_{\phi}^{\infty}$ and $W_{\phi}^{\infty}$ agree with those of the functions $a_{\phi,\lambda}$ and $W_{\phi}$, we obtain as an immediate corollary the following result, which is originally due to Fathi \cite{FATHI}.

\begin{cor}
\label{cor:fathi}
The Calabi homomorphism on $\text{Diff}_c(\D, \omega_0)$ can be expressed by the double integral 
\[
\mathcal{C}(\phi)=\int_{\D\times\D\setminus \Delta}W_\phi(x, y) \, \omega_0(x) \wedge \omega_0(y).
\]
\end{cor}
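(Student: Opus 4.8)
The plan is to obtain Corollary \ref{cor:fathi} as a formal consequence of Theorem \ref{main-thm} by a single integration over the disk. Two ingredients are already in place and will be used as black boxes: first, by the very definition of the Calabi homomorphism together with the $L^1$ ergodic theorem, $\int_\D a_\phi^\infty\,\omega_0 = \int_\D a_{\phi,\lambda}\,\omega_0 = \mathcal{C}(\phi)$; second, again by the ergodic theorem, the integral of $W_\phi^\infty$ over $\D\times\D$ coincides with the integral of $W_\phi$ over $\D\times\D\setminus\Delta$. Granting these, only Fubini's theorem and the negligibility of the diagonal are needed.

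Concretely, I would proceed as follows. Since the set $\Omega$ of Theorem \ref{main-thm} has full measure in $\D$, integrating the pointwise identity of that theorem over $\Omega$ and inserting the first ingredient gives
\[
\mathcal{C}(\phi) \;=\; \int_\D a_{\phi,\lambda}\,\omega_0 \;=\; \int_\D a_\phi^\infty\,\omega_0 \;=\; \int_\Omega \left( \int_\D W_\phi^\infty(x,y)\,\omega_0(y) \right)\omega_0(x).
\]
The inner integral is, by Theorem \ref{main-thm}, the function $a_\phi^\infty$ on $\Omega$, which is integrable; and $W_\phi^\infty$ is integrable on $\D\times\D$. Hence Fubini's theorem lets me rewrite the iterated integral as a genuine double integral over $\D\times\D$ (replacing $\Omega$ by $\D$ changes nothing, as $\Omega$ has full measure). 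Applying the second ingredient and using that $\Delta$ is Lebesgue-null in $\D\times\D$, I conclude
\[
\mathcal{C}(\phi) \;=\; \int_{\D\times\D} W_\phi^\infty\,\omega_0(x)\wedge\omega_0(y) \;=\; \int_{\D\times\D} W_\phi\,\omega_0(x)\wedge\omega_0(y) \;=\; \int_{\D\times\D\setminus\Delta} W_\phi(x, y)\,\omega_0(x)\wedge\omega_0(y),
\]
which is the desired formula.

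The only point demanding attention is the legitimacy of Fubini's theorem, i.e.\ that $W_\phi^\infty$ really lies in $L^1(\D\times\D)$ — this is part of the ergodic-theoretic setup recalled before the statement — and that the full-measure subset of $\D$ on which $a_\phi^\infty$ is defined, the full-measure subset of $\D\times\D$ on which $W_\phi^\infty$ is defined, and the set $\Omega$ can all be used simultaneously; since each has full measure, so does their intersection, and no genuine difficulty arises. In short, all the analytic substance has been concentrated into Theorem \ref{main-thm} and into the identification of the integrals of the ergodic limits with those of $a_{\phi,\lambda}$ and $W_\phi$, so that the recovery of Fathi's formula is a routine application of Fubini's theorem once the diagonal is discarded.
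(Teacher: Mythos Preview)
Your argument is correct and matches the paper's own derivation: one integrates the identity of Theorem \ref{main-thm} over the full-measure set $\Omega$ and invokes the two ergodic identities $\int_\D a_\phi^\infty\,\omega_0=\int_\D a_{\phi,\lambda}\,\omega_0$ and $\int_{\D\times\D} W_\phi^\infty=\int_{\D\times\D} W_\phi$, together with Fubini and the negligibility of $\Delta$. The paper's proof is stated in a single sentence, but the content is exactly what you wrote.
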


Other proofs of this corollary are known. In \cite{GamGhys}, Gambaudo and Ghys give two algebraic proofs of it, one of which uses the fact that every homomorphism from $\text{Diff}_c(\D, \omega_0)$ into $\R$ is a multiple of the Calabi homomorphism. The latter fact is a deep result of Banyaga, see \cite{Banyaga76}. An elementary proof of Corollary \ref{cor:fathi} using complex analysis is presented by Shelukhin in \cite{Shel}.
Our  proof of Theorem \ref{main-thm} uses elementary results about intersection numbers between curves and surfaces in dimension three.

\medskip

\paragraph{\bf Aknowledgements:} I thank Jordan Payette for thorough comments on the first version of this paper. The author is supported by the SFB/TRR 191 `Symplectic Structures in Geometry,  Algebra and Dynamics',  funded by the DFG.

\section{Action,  Winding and the Calabi homomorphism}
\label{AWCs}

Denote by $\omega_0:= dx\wedge dy$ the standard area form on $\R^2$ and by $\D\subset \R^2$ the unit disk. The symbol $ \text{Diff}_c(\D, \omega_0)$ denotes  the group of 
smooth diffeomorphisms of $\D$ compactly supported on the interior and  that preserve $\omega_0$. The group $ \text{Diff}_c(\D, \omega_0)$ is contractible, see e.g. \cite{Smale59}, \cite{Banyaga}.

\subsection{Action of disk diffeomorphisms.}
Take $\phi \in \text{Diff}_c(\D, \omega_0)$ and let $\lambda$ be a smooth primitive of $\omega_0$ on $\D$. Since $\phi$ preserves $\omega_0$,  the 1-form $\phi^* \lambda - \lambda$ is closed and hence exact on $\D$.
\begin{defn}

The {\em action} of $\phi$ with respect to $\lambda$ is the unique smooth function  
\[
a_{\phi, \lambda} : \D \rightarrow \R
\]
such that
\begin{equation}
\label{uno}
da_{\phi,\lambda}  = \phi^* \lambda - \lambda,  
\end{equation}
and 
\begin{equation}
\label{due}
a_{\phi,\lambda} (z) = 0, 
\end{equation}
for every $z\in \partial \D$. 
\end{defn}

The existence of the function $a_{\phi,\lambda}$ satisfying (\ref{uno}) and (\ref{due}) follows from the fact that any primitive of $\phi^*\lambda-\lambda$ is constant on $\partial \D $ because $\phi_{|\partial \D}= \mathrm{id}.$ 

\begin{lem}
\label{formule}
Let $ { \phi}$ and $ {\psi}$ be elements of $\mathrm{Diff}_c(\D, \omega_0)$. Let $\lambda$ be a smooth primitive of $\omega_0$ and let $u$ be a smooth real function on $\D$. Then:
\begin{enumerate}[(i)]
\item $a_{ { \phi}, \lambda+du} = a_{ { \phi}, \lambda} + u\circ  \phi - u$.
\item $a_{ {\psi}\circ  { \phi}, \lambda} = a_{ {\psi}, \lambda} \circ  \phi + a_{ { \phi},  \lambda} = a_{ {\psi}, \lambda} + a_{ { \phi}, \psi^* \lambda}$.
\item $a_{ { \phi}^{-1}, \lambda} = - a_{ { \phi}, \lambda} \circ  \phi^{-1} = - a_{ { \phi}, ( \phi^{-1})^*\lambda}$.
\end{enumerate}
\end{lem}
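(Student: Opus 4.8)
The plan is to deduce all three identities from the \emph{uniqueness} clause in the definition of the action: a smooth function on $\D$ equals $a_{\phi,\lambda}$ as soon as its differential is $\phi^*\lambda-\lambda$ and it vanishes on $\partial\D$. As a preliminary observation I would note that $a_{\mathrm{id},\lambda}=0$, since the zero function trivially satisfies both defining conditions. The recurring point in every verification of the boundary condition is that $\phi$ and $\psi$ restrict to the identity near $\partial\D$, so any expression of the form $v\circ\phi - v$ (or $\phi^*\eta-\eta$) vanishes there.

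For (i), I would check that $a_{\phi,\lambda}+u\circ\phi-u$ satisfies the two defining properties of $a_{\phi,\lambda+du}$: its differential is $(\phi^*\lambda-\lambda)+\phi^*(du)-du=\phi^*(\lambda+du)-(\lambda+du)$, and it vanishes on $\partial\D$ because $a_{\phi,\lambda}$ does and because $u\circ\phi-u\equiv 0$ there. Uniqueness then gives (i).

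For the first identity in (ii), I would similarly verify that $a_{\psi,\lambda}\circ\phi+a_{\phi,\lambda}$ has differential $\phi^*(\psi^*\lambda-\lambda)+(\phi^*\lambda-\lambda)=(\psi\circ\phi)^*\lambda-\lambda$ and vanishes on $\partial\D$, hence equals $a_{\psi\circ\phi,\lambda}$. The second identity in (ii) then follows by applying (i) to the exact form $\psi^*\lambda-\lambda=da_{\psi,\lambda}$, i.e.\ with $u=a_{\psi,\lambda}$, which yields $a_{\phi,\psi^*\lambda}=a_{\phi,\lambda}+a_{\psi,\lambda}\circ\phi-a_{\psi,\lambda}$ and rearranges to the claim.

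Finally, (iii) is obtained by specializing (ii) to $\psi=\phi^{-1}$ and using $a_{\mathrm{id},\lambda}=0$: the first form of (ii) gives $0=a_{\phi^{-1},\lambda}\circ\phi+a_{\phi,\lambda}$, and the second form gives $0=a_{\phi^{-1},\lambda}+a_{\phi,(\phi^{-1})^*\lambda}$; both rearrange to the two stated expressions for $a_{\phi^{-1},\lambda}$. I do not expect a genuine obstacle here: the lemma is a formal consequence of uniqueness, and the only thing requiring care is the systematic check, in each step, that the candidate function vanishes on $\partial\D$.
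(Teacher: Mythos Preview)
Your proposal is correct and follows essentially the same approach as the paper: both argue via the uniqueness clause in the definition of the action, checking in each case that the candidate has the right differential and vanishes on $\partial\D$. The only cosmetic difference is that for the second identity in (ii) the paper verifies the defining properties directly, whereas you derive it by applying (i) with $u=a_{\psi,\lambda}$ and combining with the first identity; both arguments are equivalent and equally short.
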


\begin{proof}
The first claim follow from the identities
\[
 \phi^* (\lambda + du) - (\lambda + du) = d a_{ { \phi}, \lambda} +  \phi^* (du) - du = d(a_{ { \phi}, \lambda}+u\circ  \phi - u), 
\]
and
\[
 a_{ { \phi}, \lambda}(z) + u( \phi(z)) - u(z)=u( \phi(z)) - u(z)=0, \qquad \forall z\in\partial\D.
\]
The first identity in (ii) follows from
\[
(\psi\circ  \phi)^* \lambda - \lambda =  \phi^*(\psi^* \lambda - \lambda) +  \phi^* \lambda - \lambda =  \phi^*(da_{ {\psi}, \lambda}) + d a_{ { \phi}, \lambda} = d ( a_{ {\psi}, \lambda} \circ  \phi + a_{ { \phi}, \lambda} ), 
\]
and
\[
a_{ { \phi}, \lambda} (z) + a_{ {\psi}, \lambda} ( \phi(z))=0, \qquad  \forall z\in\partial\D.
\]
 The second identity in (ii) follows from
\[
(\psi\circ  \phi)^* \lambda - \lambda =  \phi^*(\psi^* \lambda) - \psi^* \lambda + \psi^* \lambda - \lambda = d( a_{ { \phi}, \psi^* \lambda} + a_{ {\psi}, \lambda}), 
\]
and
\[
\begin{split}
 a_{ {\psi}, \lambda}(z) + a_{ { \phi}, \psi^* \lambda}(z)=0,  \qquad \forall z\in\partial\D.
\end{split}
\]
The two formulas in (iii) follow from those in (ii) applied to the case $ {\psi}= { \phi}^{-1}$,  because $a_{\mathrm{id}, \lambda}=0$ for every $\lambda$.
\end{proof}

\begin{rem}
Statement (i) in the lemma above implies that the value of $a_{\phi,\lambda}(z)$ is independent of the primitive $\lambda$ if $z\in \D$ is a fixed point of $\phi$.
\end{rem}

It is convenient to read the action also in terms of  the Hamiltonian formalism. Let $H:\D\times[0,1] \rightarrow \R$,  $H_t(z)=H(z,t)$ be  a smooth time-dependent Hamiltonian that vanishes near $\partial \D \times [0,1]$. Consider  its  time-dependent Hamiltonian vector field $X_t$, which is  defined by the condition 
\begin{equation}
\label{hamilt}
 \omega_0(X_{t}, \cdot) = dH_t(\cdot).
\end{equation}
We denote by $\phi^t$ its non-autonomous flow, i.e.\ the isotopy that is defined by
\[
\frac{d}{dt}\phi^t=X_{t}\circ \phi^t, \qquad \phi^0 = \mathrm{id}.
\]
Any $\phi$ in  $\text{Diff}_c(\D, \omega_0)$ is the time-one map $\phi=\phi^1$ of an isotopy $\phi^t$ as above, which we shall call Hamiltonian isotopy.

\begin{prop}
If $\phi$, $\phi^t$ and $H$ are as above, then
\begin{equation}\label{actionH}
    a_{\phi, \lambda}(z)=\int_{ \{ t\rightarrow \phi^t(z) \} }\lambda + \int_0^1 H_t(\phi^t(z))dt. 
\end{equation}
\end{prop}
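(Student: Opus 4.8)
The plan is to verify formula \eqref{actionH} by differentiating the right-hand side along the isotopy and showing that, viewed as a function of $\phi^s$ (restricting the Hamiltonian to the time interval $[0,s]$), it satisfies the same two defining conditions \eqref{uno} and \eqref{due} as the action $a_{\phi^s,\lambda}$, after which uniqueness forces equality. Concretely, define
\[
b_s(z) := \int_{\{t\mapsto\phi^t(z)\,:\,t\in[0,s]\}}\lambda + \int_0^s H_t(\phi^t(z))\,dt,
\]
so that $b_0 \equiv 0$ and the claim is $b_1 = a_{\phi,\lambda}$. First I would check the boundary condition: since $H$ vanishes near $\partial\D\times[0,1]$, for $z$ near $\partial\D$ the orbit $t\mapsto\phi^t(z)$ is constant and both integrands vanish, so $b_s(z)=0$ there; this takes care of \eqref{due}.

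The heart of the matter is to show $db_1 = \phi^*\lambda - \lambda$. I would do this by computing $\tfrac{d}{ds} b_s$ and $\tfrac{d}{ds}\bigl((\phi^s)^*\lambda\bigr)$ as $1$-forms on $\D$ and matching them, using $b_0 = 0$ and $(\phi^0)^*\lambda = \lambda$ to integrate up. For the first, differentiating the defining integral in $s$ gives a pointwise term plus a term from the $s$-dependence of the orbit; the standard manipulation is to write $b_s$ as $\int_0^s \bigl(\lambda_{\phi^t(z)}(X_t(\phi^t(z))) + H_t(\phi^t(z))\bigr)dt$ and then differentiate the pulled-back integrand. Equivalently, and more cleanly, one uses Cartan's formula: for a (possibly time-dependent) flow $\phi^t$ generated by $X_t$, one has $\tfrac{d}{dt}(\phi^t)^*\lambda = (\phi^t)^*\bigl(\mathcal{L}_{X_t}\lambda\bigr) = (\phi^t)^*\bigl(d\,\iota_{X_t}\lambda + \iota_{X_t}d\lambda\bigr) = (\phi^t)^*\bigl(d(\iota_{X_t}\lambda) + \iota_{X_t}\omega_0\bigr)$. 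Since $\iota_{X_t}\omega_0 = \omega_0(X_t,\cdot) = dH_t$ by \eqref{hamilt}, this is $(\phi^t)^*\bigl(d(\iota_{X_t}\lambda + H_t)\bigr) = d\bigl((\phi^t)^*(\iota_{X_t}\lambda + H_t)\bigr)$, because pullback commutes with $d$. Integrating from $0$ to $s$,
\[
(\phi^s)^*\lambda - \lambda = d\int_0^s (\phi^t)^*\bigl(\iota_{X_t}\lambda + H_t\bigr)\,dt,
\]
and the function inside the $d$ evaluated at $z$ is exactly $\int_0^s\bigl(\lambda_{\phi^t(z)}(X_t(\phi^t(z))) + H_t(\phi^t(z))\bigr)dt = b_s(z)$, since $\lambda_{\phi^t(z)}(X_t(\phi^t(z)))\,dt$ is precisely the integrand of the line integral $\int_{\{t\mapsto\phi^t(z)\}}\lambda$ over the subinterval $[0,s]$. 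Taking $s=1$ yields $db_1 = \phi^*\lambda - \lambda$, which is \eqref{uno}.

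Combining the two verifications, $b_1$ satisfies both \eqref{uno} and \eqref{due}, so by the uniqueness statement in the definition of the action, $b_1 = a_{\phi,\lambda}$, which is \eqref{actionH}. The only mild subtlety — and the step I expect to require the most care in writing out — is the bookkeeping that identifies $\int_0^s (\phi^t)^*(\iota_{X_t}\lambda)\,dt$ with the arclength integral $\int_{\{t\mapsto\phi^t(z)\}}\lambda$ along the partial orbit, i.e. checking that the line integral of $\lambda$ over the curve $t\mapsto\phi^t(z)$ is $\int_0^1 \lambda_{\phi^t(z)}\bigl(\tfrac{d}{dt}\phi^t(z)\bigr)dt = \int_0^1\lambda_{\phi^t(z)}(X_t(\phi^t(z)))\,dt$; this is just the definition of integrating a $1$-form over a parametrized curve, but it is where the sign convention is pinned down. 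One should also note that differentiating under the integral sign and interchanging $d$ (exterior derivative in $z$) with $\int_0^s dt$ is justified by smoothness and compact support of $H$.
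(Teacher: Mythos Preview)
Your proof is correct and takes a genuinely different route from the paper's. The paper fixes a path $\gamma$ from $z$ to a boundary point, forms the homotopy $\psi(s,t)=\phi^t(\gamma(s))$, and computes $\int_{[0,1]^2}\psi^*\omega_0$ in two ways: once via the Hamiltonian identity \eqref{hamilt} (obtaining $\int_0^1 H_t(\phi^t(z))\,dt$) and once via Stokes' theorem on the square (obtaining $a_{\phi,\lambda}(z)-\int_{\{t\mapsto\phi^t(z)\}}\lambda$). Your argument instead verifies directly that the right-hand side of \eqref{actionH} satisfies the defining properties \eqref{uno} and \eqref{due} of the action, using Cartan's magic formula $\tfrac{d}{dt}(\phi^t)^*\lambda=(\phi^t)^*\mathcal{L}_{X_t}\lambda$ to get an exact expression for $(\phi^s)^*\lambda-\lambda$. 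This avoids introducing the auxiliary path $\gamma$ and the two-dimensional Stokes argument, at the modest cost of justifying the interchange of $d$ with $\int_0^s dt$ (which, as you note, is immediate from smoothness). Both approaches are elementary; yours is arguably more streamlined and makes the role of the uniqueness in the definition of $a_{\phi,\lambda}$ more explicit, while the paper's Stokes computation foreshadows the three-dimensional Stokes argument used later in Proposition~\ref{actionvsintersection}.
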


\begin{proof}
Let $\gamma:[0, 1]\rightarrow \D$ be a smooth path such that $\gamma(0)=z$ and $\gamma(1)\in \partial \D$. Define the smooth map
\[
\psi: [0, 1]^2 \rightarrow \D,  \qquad \psi(s, t) := \phi^t(\gamma(s)).
\]
We compute the integral of $\psi^*\omega_0$ on $[0, 1]^2$ in two different ways. The first computation gives us:
\[
\begin{split}
\int_{[0, 1]^2} \psi^* \omega &= \int_{[0, 1]^2} \omega_0(\phi^t(\gamma(s)))\left[ \frac{\partial}{\partial s} \phi^t(\gamma(s)),  X_{t} (\phi^t(\gamma(s))) \right] \,  ds \,  dt \\ &=-  \int_{[0, 1]^2} dH_t(\phi^t(\gamma(s)))\left[ \frac{\partial}{\partial s} \phi^t(\gamma(s)) \right]\,  ds\,  dt \\ &=-  \int_0^1 \left( \int_0^1 \frac{\partial}{\partial s} H_t(\phi^t(\gamma(s)))\,  ds \right)\,  dt \\ &= - \int_0^1 \left( H_t(\phi^t(\gamma(1))) - H_t(\phi^t(z)) \right)\,  dt = \int_0^1 H_t(\phi^t(z))\,  dt.
\end{split}
\]
The second computation uses Stokes' theorem and the fact that ${\phi^t}|_{\partial\D}= \mathrm{id}$:
\[
\begin{split}
\int_{[0, 1]^2} \psi^* \omega_0 &= \int_{[0, 1]^2} \psi^* d\lambda = \int_{[0, 1]^2} d\psi^* \lambda = \int_{\partial [0, 1]^2} \psi^* \lambda \\ &= \int_{\gamma} \lambda + \int_{\{t\mapsto \phi^t(\gamma(1))\}} \lambda - \int_{\phi\circ \gamma} \lambda - \int_{\{t\mapsto \phi^t(z)\}}\lambda  \\ &=  - \int_{\gamma} ( \phi^* \lambda - \lambda) - \int_{\{t\mapsto \phi^t(z)\}} \lambda\\ &= - \int_{\gamma} da_{\phi, \lambda} - \int_{\{t\mapsto \phi^t(z)\}} \lambda\\ &= a_{\phi, \lambda}(z) - \int_{\{t\mapsto \phi^t(z)\}}\lambda.
\end{split}
\]
The desired formula for $a_{\phi, \lambda}(z)$ follows by comparing the above two identities.

\end{proof}

\subsection{The Calabi Homomorphism.}

\begin{defn}
The Calabi homomorphism is the map
\[
    \mathcal{C}:\text{Diff}_c(\D, \omega_0) \rightarrow  \R
\]
defined by 
\[
\mathcal{C}(\phi)=\int_{\D}a_{\phi,  \lambda}\, \omega_0.
\]

\end{defn}

The first thing we remark is that this map is well defined, meaning that it does not depend on the choice of the primitive of $\omega_0$. This follows  from Lemma \ref{formule}(i) and the fact that $\phi$ is an area preserving diffeomorphism.

Next we should prove that it is in fact a homomorphism. This is the following:
\begin{prop}
If $ \phi, \psi \in \text{Diff}_c(\D, \omega_0)$ then  $\mathcal{C}(\phi \circ \psi)=\mathcal{C}(\phi)+\mathcal{C}(\psi)$.
\end{prop}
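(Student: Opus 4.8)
The plan is to reduce the homomorphism property of $\mathcal{C}$ to the composition formula in Lemma~\ref{formule}(ii) together with the fact that $\phi$ and $\psi$ preserve $\omega_0$. Starting from the definition,
\[
\mathcal{C}(\phi\circ\psi)=\int_{\D}a_{\phi\circ\psi,\lambda}\,\omega_0,
\]
I would substitute the identity $a_{\phi\circ\psi,\lambda}=a_{\phi,\lambda}\circ\psi+a_{\psi,\lambda}$ from Lemma~\ref{formule}(ii) (with the roles of $\phi$ and $\psi$ matched appropriately to the statement there), obtaining
\[
\mathcal{C}(\phi\circ\psi)=\int_{\D}\bigl(a_{\phi,\lambda}\circ\psi\bigr)\,\omega_0+\int_{\D}a_{\psi,\lambda}\,\omega_0.
\]
The second term is exactly $\mathcal{C}(\psi)$. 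For the first term I would apply the change of variables formula: since $\psi\in\mathrm{Diff}_c(\D,\omega_0)$ preserves $\omega_0$ and fixes $\D$ setwise (being the identity near $\partial\D$), we have $\int_{\D}(a_{\phi,\lambda}\circ\psi)\,\omega_0=\int_{\D}a_{\phi,\lambda}\,\psi_*\omega_0=\int_{\D}a_{\phi,\lambda}\,\omega_0=\mathcal{C}(\phi)$. Combining the two pieces gives $\mathcal{C}(\phi\circ\psi)=\mathcal{C}(\phi)+\mathcal{C}(\psi)$.

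The main (and essentially only) subtlety is bookkeeping: one must make sure that the version of Lemma~\ref{formule}(ii) being invoked is the one that places the diffeomorphism in a position where the $\omega_0$-preservation of that particular map can be exploited. The lemma provides two equivalent decompositions, $a_{\psi\circ\phi,\lambda}=a_{\psi,\lambda}\circ\phi+a_{\phi,\lambda}$ and $a_{\psi\circ\phi,\lambda}=a_{\psi,\lambda}+a_{\phi,\psi^*\lambda}$; I would use the first one so that only a single honest change of variables (by an area-preserving map) is needed, rather than also having to track the dependence of the action on the primitive $\psi^*\lambda$. There is no real obstacle here — the statement is a routine consequence of the already-established composition identity and invariance of $\omega_0$ — so the ``hard part'' is merely choosing the cleanest route and being careful that the well-definedness of $\mathcal{C}$ (independence of $\lambda$), which was noted just above, is what guarantees the final expressions are unambiguous.
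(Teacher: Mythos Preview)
Your proof is correct and follows essentially the same approach as the paper: apply Lemma~\ref{formule}(ii) and integrate. The only difference is which of the two equivalent decompositions in that lemma is selected. You use $a_{\phi\circ\psi,\lambda}=a_{\phi,\lambda}\circ\psi+a_{\psi,\lambda}$ and then a change of variables via $\psi^*\omega_0=\omega_0$; the paper instead uses $a_{\phi\circ\psi,\lambda}=a_{\psi,\phi^*\lambda}+a_{\phi,\lambda}$ and invokes the already-noted independence of $\mathcal{C}$ from the choice of primitive. Both are one-line reductions, and your explicit discussion of this choice shows you understood the trade-off.
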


\begin{proof}

From Lemma \ref{formule}(ii) we have $$a_{\phi \circ \psi,  \lambda}=a_{\psi, \phi^*\lambda}+a_{\phi, \lambda}, $$ and  integrating on both sides we get the desired result. 

\end{proof}

\begin{rem}
We can express $\mathcal{C}(\phi)$ also in terms of a defining Hamiltonian. If $H: \D\times [0,1] \rightarrow \R$ is a time-dependent Hamiltonian vanishing near $\partial \D\times [0,1]$ such that the time-one map of the corresponding Hamiltonian isotopy is $\phi$, then $\mathcal{C}(\phi)$ is given by the formula
\[
\mathcal{C}({\phi}) = 2 \int_{\D \times [0,1]} H(z,t)\,  \omega_0 \wedge dt.
\]
The proof of this equality can found in \cite{MS}[Lemma 10.27] or with the above notation in \cite{ABHS3}[Prop 2.7] in the more general setting of Hamiltonian diffeomorphisms of $\D$ that are not necessarily compactly supported. It can be used  for instance,  to show that the Calabi homomorphism  is not continuous in the $\text{C}^0$-topology. This can be done by constructing radially defined Hamiltonians using bump functions with fixed non-zero  integral and whose time one-maps converge to the identity, as shown in \cite{GamGhys}.
\end{rem}

\subsection{Winding and Intersection numbers for disk Diffeomorphisms.} \label{wiidd}
Let $\phi\in \text{Diff}_c(\D, \omega_0)$  and let $\phi^t$ be a Hamiltonian isotopy  such that $\phi^0=\mathrm{id}$ and $\phi^1=\phi$. 

\begin{defn}
The \textit{winding number} $W_\phi(x, y)$ of a pair of distinct  points $x,y\in \D$ is the real number
\[
W_\phi(x, y):=\frac{{\theta}(1)-{\theta}(0)}{2\pi},
\]
where $\theta: [0,1] \rightarrow \R$ is a continuous function such that
\begin{equation}\label{star}
 \frac{\phi^t(y)-\phi^t(x)}{||\phi^t(y)-\phi^t(x)||}=e^{i\theta(t)}.
\end{equation}
\end{defn}

The value of $W_\phi(x, y)$ does not depend on the isotopy joining $\phi$ to the identity. This holds because $\text{Diff}_c(\D,\omega_0)$ is contractible and in particular any two paths joining $\phi$ to the identity are homotopic, which implies that the winding number is the same. The map $W_\phi$ is smooth and bounded on $(\D\times\D)\setminus \Delta$, where $\Delta$ denotes the diagonal $\{(x, x)|x\in \D \} $ in $\D\times\D$ (see \cite{GamGhys}).

We want to define another relevant quantity and for this we need to work with intersection numbers in dimension three. 

To begin with, consider an embedded compact oriented curve $\Gamma \subset \D\times [0,1]$. If $ S \subset \D\times [0,1]$ is a compact co-oriented surface, that is, the normal bundle $\mathcal{N}S$ is oriented, then an intersection point $p\in S\cap \Gamma$ is called transverse  if  $$ T_p \Gamma \oplus T_p S=T_p (\D\times [0,1]). $$
A transverse intersection point $p$ is called positive if the orientation of $T_p\Gamma$ coincides with the orientation of $\mathcal{N}_p S$ and otherwise is called negative. Assume that $\Gamma$ is everywhere transverse to $S$. Then the set $\Gamma\cap S$ is finite and we denote by $I_+(\Gamma,S)$ the number of positive intersections and $I_-(\Gamma,S)$ the number of negative intersections. The difference of these two numbers defines the intersection number of $\Gamma$ and $S$:
\[
I(\Gamma,S) := I_+(\Gamma,S) - I_-(\Gamma,S) \in \mathbb{Z}.
\]

Let  $\phi\in \text{Diff}_c(\D, \omega_0)$ and $\{\phi^t\}_{t\in [0, 1]}$ be a Hamiltonian isotopy  that connects the identity to $\phi$. Given $x\in \D$, we denote by $\Gamma_{\phi}(x)$ the embedded curve
\[
\Gamma_{\phi}(x) := \{ (\phi^t(x),t) \mid t\in [0,1] \} \subset \D \times [0,1]
\]
with its natural orientation. Fix an element  $e\in \partial\D$ and for every $x \text{ in the interior of } \D$  consider the compact surface
\[
S^e_\phi(x)\subset \D\times [0,1]
\]
that is obtained by connecting each point $(\phi^t(x),t)$ of $\Gamma_{\phi}(x)$ to the boundary point $(e,t)$ by a line segment. In other words, $S^e_\phi(x)$ is parametrized by the smooth embedding
\begin{align}{\label{reparS}}
    \Psi :[0, 1] \times  [0, 1] &\rightarrow \D  \times [0, 1],
    &(t,r) \mapsto (\phi^t(x) + r(e-\phi^t(x)), t)
\end{align}
{
which induces an orientation on $S$.   We consider the disk and real line as having their canonical orientations and the ambient space $\mathbb{D}\times[0,1]$ the induced product orientation.  
}
The vector 
\begin{equation}
\label{co-orientation}
(i(e-\phi^t(x)),0 )\in T_{\Psi(t,r)}(\D\times[0,1]),
\end{equation}
where $i$ denotes the counterclockwise rotation by $\pi/2$ on $\R^2$, is everywhere transverse to $S_\phi^e(x)$ and hence defines a co-orientation of this surface.   
{We note that this co-orientation coincides with that induced by the above mentioned orientations on $S_\phi^e(x)$ and 
$\mathbb{D}\times[0,1]$:  Indeed, it suffices to check this at a single point, and we observe that at the point $(e,0)\in S_\phi^e(x)\subset\mathbb{D}\times[0,1]$, 
the basis $\{\partial_t\Psi, \partial_r\Psi, \big(i(e-\phi^t(x)),0\big)\}$ equals $\{(0,1),\big(e-\phi^t(x),0\big),\big(i(e-\phi^t(x)),0\big) \}$ which has the same orientation as $\{\big(e-\phi^t(x),0\big),\big(i(e-\phi^t(x)),0\big),(0,1)\}$ which is clearly positive.   
} 

If $x$ is an interior point of $\D$ and $y\neq x$ is another point in $\D$, then we define the intersection number of $x \text{ and } y$ as
\[
I^e_\phi(x, y):=  I(\Gamma_{\phi}(y), S^e_\phi(x)),
\]
whenever $\Gamma_{\phi}(y)$ meets $S^e_\phi(x)$ transversely.   
\begin{figure}[ht]
    \centering
   \scalebox{.8}{\includegraphics{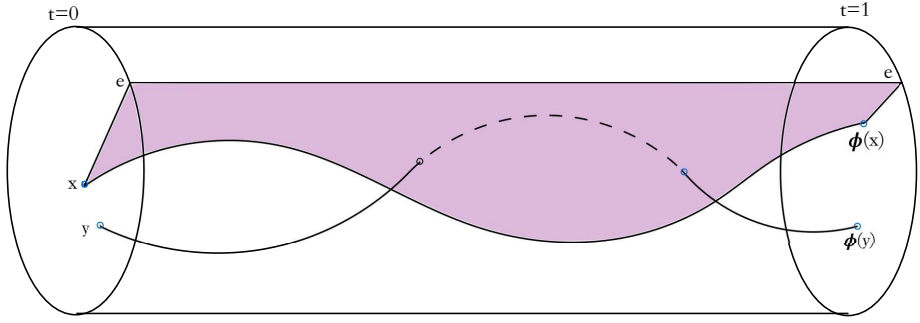}}
    \caption{$I(\Gamma_{\phi}(y), S^e_\phi(x))=0$.}
    \label{fig:my_label}
\end{figure}\\

\begin{rem} Given $x\neq y$ as above, it follows from Sard's theorem  that the set of points $e\in \partial\D$ for which $\Gamma_{\phi}(y)$ is transverse to $S^e_{\phi}(x)$ has full Lebesgue measure. Furthermore, even though the definition above depends on the choice of the element $e\in \partial\D$, a different choice of $e$ changes the value of $I_{\phi}^e(x,y)$ by at most one. This follows from Proposition \ref{Bound1} below.
\end{rem}

Next we relate the winding number and the intersection number. 

\begin{prop}\label{Bound1}
Let $x\in \mathrm{int}(\D)$, $y\in \D\setminus \{x\}$ and $e\in \partial \D$ be such that $\Gamma_{\phi}(y)$ meets $S^e_{\phi}(x)$ transversally. Then
\begin{equation}\label{ineq1}
    \Big|W_\phi(x, y)-I^e_\phi(x, y)\Big|\leq \frac{3}{2}
\end{equation}
\end{prop}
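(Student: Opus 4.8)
The plan is to realize both $W_\phi(x,y)$ and $I^e_\phi(x,y)$ as counts (with sign) of the same geometric event—namely, the instants $t$ at which the direction of the vector $\phi^t(y)-\phi^t(x)$ passes through the direction $\pm(e-\phi^t(x))$—and then to bound the discrepancy by the number of ``partial turns'' that are not yet completed. First I would set up coordinates on the surface $S^e_\phi(x)$ using the parametrization $\Psi$ from \eqref{reparS}: a point $\phi^t(y)$ lies on $S^e_\phi(x)$ precisely when $\phi^t(y)$ is on the segment from $\phi^t(x)$ to $e$, i.e.\ when $\phi^t(y)-\phi^t(x)$ is a positive multiple of $e-\phi^t(x)$. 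Writing the winding data from \eqref{star} via the continuous angle $\theta(t)$ and writing $e-\phi^t(x)$ in polar form as $e^{i\alpha(t)}$ with $\alpha$ continuous (possible since $\phi^t(x)$ stays in the interior and $e\in\partial\D$, so $e-\phi^t(x)\neq 0$), the intersection points of $\Gamma_\phi(y)$ with $S^e_\phi(x)$ correspond to solutions of $\theta(t)-\alpha(t)\in 2\pi\mathbb{Z}$, and transversality of the intersection corresponds to $\theta'(t)-\alpha'(t)\neq 0$ at such $t$, with the sign of $\theta'-\alpha'$ determining whether the intersection is positive or negative—this is exactly the content of the co-orientation computation already carried out in the excerpt with the vector $(i(e-\phi^t(x)),0)$.

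The key step is then the observation that, by this correspondence, $I^e_\phi(x,y)$ is the signed count of integer crossings of the continuous function $\beta(t):=\frac{1}{2\pi}(\theta(t)-\alpha(t))$ on $[0,1]$, which equals $\lfloor \beta(1)\rfloor - \lfloor\beta(0)\rfloor$ up to a correction coming from whether $\beta$ starts or ends exactly at an integer (which we may exclude by the transversality hypothesis, or absorb into the $3/2$). On the other hand, $W_\phi(x,y)=\frac{1}{2\pi}(\theta(1)-\theta(0))=\beta(1)-\beta(0)+\frac{1}{2\pi}(\alpha(1)-\alpha(0))$. Since $\phi^1=\phi^0=\mathrm{id}$ on a neighborhood of $\partial\D$ and $\phi^t(x)$ is a fixed interior point at $t=0,1$—in fact $\phi^0(x)=\phi^1(x)=x$—we get $\alpha(1)=\alpha(0)$ modulo $2\pi$, but to control the actual real difference I would instead note that the total variation of $\alpha$ is not bounded a priori; the better route is to bound $W_\phi(x,y)-I^e_\phi(x,y)$ pointwise. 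Combining $W_\phi-I^e_\phi = (\beta(1)-\beta(0)) - (\lfloor\beta(1)\rfloor - \lfloor\beta(0)\rfloor) + \frac{1}{2\pi}(\alpha(1)-\alpha(0))$, the first difference lies in $(-1,1)$, and $\frac{1}{2\pi}(\alpha(1)-\alpha(0))$ lies in $[-\tfrac12,\tfrac12]$ because $e-\phi^0(x)=e-\phi^1(x)=e-x$ forces $\alpha(1)-\alpha(0)\in 2\pi\mathbb{Z}$, and one checks the winding of $t\mapsto e-\phi^t(x)$ around $0$ is zero since the isotopy is contractible and $\phi^t(x)$ never reaches $\partial\D$—hence $\alpha(1)=\alpha(0)$ exactly after choosing the continuous lift, giving $|W_\phi-I^e_\phi|<1$. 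The stated bound $3/2$ then leaves comfortable room to handle the endpoint-degeneracy bookkeeping (values of $\beta$ at $t=0,1$ landing near or at integers, and the half-open conventions in counting crossings).

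The main obstacle I anticipate is the careful sign-and-orientation bookkeeping: one must verify that the positive/negative intersection convention induced by the co-orientation vector $(i(e-\phi^t(x)),0)$ of $S^e_\phi(x)$, together with the natural orientation of $\Gamma_\phi(y)$, matches the sign of $\theta'(t)-\alpha'(t)$ (equivalently $\beta'(t)$) and not its negative, and that the crossing count of $\beta$ is read with the correct orientation of $[0,1]$. This is precisely where a sign error would either flip the formula or inflate the constant, so I would do this computation explicitly at one transverse intersection point—mirroring the single-point check already done in the excerpt for the co-orientation—and then invoke homotopy invariance and continuity of all the data to conclude. The remaining analytic points (existence of the continuous lifts $\theta,\alpha$, finiteness of the crossing set under transversality, and the elementary estimate on $\lfloor\beta(1)\rfloor-\lfloor\beta(0)\rfloor$ versus $\beta(1)-\beta(0)$) are routine.
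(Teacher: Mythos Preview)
There is a genuine error in the step where you claim $\phi^0(x)=\phi^1(x)=x$ and conclude $\alpha(1)=\alpha(0)$. In fact $\phi^1(x)=\phi(x)$, which is not $x$ in general, so the curve $t\mapsto e-\phi^t(x)$ is not closed and your ``winding around $0$ is zero'' argument does not apply. As a consequence your conclusion $|W_\phi-I^e_\phi|<1$ is too strong; the paper explicitly remarks that the constant $\tfrac32$ in \eqref{ineq1} is optimal, so a bound strictly below $\tfrac32$ cannot hold for all admissible $x,y,e$.

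The good news is that your approach is easily repaired and then does yield exactly the stated bound. Since $e\in\partial\D$ and $\phi^t(x)$ lies in the open disk, the vector $e-\phi^t(x)$ always has strictly positive component in the direction $e$ (with $e=1$, its real part is $1-\mathrm{Re}\,\phi^t(x)>0$), so the continuous argument $\alpha(t)$ can be chosen to stay in an open interval of length $\pi$; hence $\frac{1}{2\pi}|\alpha(1)-\alpha(0)|<\tfrac12$. Combining this with your fractional-part estimate $\big|(\beta(1)-\beta(0))-(\lfloor\beta(1)\rfloor-\lfloor\beta(0)\rfloor)\big|<1$ gives $|W_\phi-I^e_\phi|<\tfrac32$. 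This route is genuinely different from the paper's proof, which instead radially projects $\phi^t(y)$ from $\phi^t(x)$ onto $\partial\D$, compares the resulting boundary winding $w_\phi$ with $W_\phi$ (error $\le\tfrac12$ from an elementary angle estimate), and then caps the boundary curve to a closed loop whose degree at $e$ equals $I^e_\phi$ (error $\le 1$ from the cap). Your angle-difference method avoids both the radial projection and the capping, trading them for the integer-crossing identity $I^e_\phi=\lfloor\beta(1)\rfloor-\lfloor\beta(0)\rfloor$ and the half-plane bound on $\alpha$; once the sign check you flag is carried out, it is arguably more direct.
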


{
\begin{rem} 
Although we will not use this fact, the constant $\frac{3}{2}$ in (\ref{ineq1}) is optimal.  
\end{rem}
}

\begin{proof} 
For every $t\in [0,1]$, we can push out the trajectory of $y$ in $\D\times [0,1]$ to the boundary of the cylinder in the following way: consider the curve
$\beta:[0,1]\rightarrow \partial\D \times[0,1]$ defined by  $\beta(t)=(h_t(\phi^t(y)),t) $ where $h_t:\D\to \partial\D$ maps  each ray from $\phi^t(x)$ to its intersection with $\partial\D$.  More precisely, every $z\neq \phi^t(x)$ in $\D\setminus\{\phi^t(x)\}$ can be written as $z=\phi^t(x)+r e^{i\alpha}$ where 
\[
r=|z-\phi^t(x)| \text{ and } e^{i\alpha}:=\frac{z-\phi^t(x)}{|z-\phi^t(x)|},
\]
and we set
\[
h_t(z):=\phi^t(x)+R e^{i\alpha},  
\]
where 
\[
R =\max\{s>0|\, ||\phi^t(x)+se^{i\alpha(z) }|| \leq 1 \}.
\]
The winding of this new trajectory on $\partial\D$  is given by $$w_\phi(x,y)=\frac{\sigma(1)-\sigma(0)}{2\pi}$$ 
where $$e^{i \sigma(t)}=h_t(\phi^t(y)).$$ 
Since the absolute value of the angle between the vectors $\phi^t(y)-\phi^t(x)$ and $h_t(\phi^t(y))$ is not larger than $\frac{\pi}{2}$, the lifts $\sigma$ as above and $\theta$ as in (\ref{star}) can be chosen so that $|\theta(t)-\sigma(t)|\leq\frac{\pi}{2} $. It follows that 
\begin{equation}\label{twowindings}
    |w_\phi(x,y)-W_\phi(x,y)|\leq \frac{1}{2}.
\end{equation}

\medskip
Next, we consider  the curve $
    \eta:[0, 1]  \rightarrow \partial\mathbb{D}$, 
\[ 
t \rightarrow \eta(t):=h_t(\phi^t(y)),
\]
and extend it to a closed curve $\hat{\eta}: [0,2] \rightarrow \partial\mathbb{D}$ in such a way that $\hat{\eta}((1,2))$ does not contain $e$. The absolute value of the winding number of the curve $\hat{\eta}|_{[1,2]}$ does not exceed one.
We now lift $\hat{\eta}$ to a self-map of $\partial\mathbb{D}$:
\[
    f:\partial\mathbb{D} \rightarrow \partial\mathbb{D}, \qquad e^{ 2 \pi i t} \mapsto \hat{\eta}(2t).
\]

The oriented degree of this map coincides with its winding number, which differs from the winding number $w_\phi(x,y)$ exactly by the winding number of $\hat{\eta}|_{[1,2]}$, and hence
\[
|w_\phi(x,y)-\deg(f)|\leq 1.
\]
On the other hand, our transversality assumption implies that $e$ is a regular value for the map $f$, so the oriented degree of this map can be computed as
\[
\deg(f)=\sum_{q\in f^{-1}(e)}\sgn(f'(q)),
\]
but the latter sum gives us precisely $I_{\phi}^e(x,y)$. This fact, together with (\ref{twowindings}), yields (\ref{ineq1}).
\end{proof}

In order to state the next result, we need to fix some notation and recall some basic facts about the oriented degree for smooth maps between surfaces. Let  $M$ and $N$ be two oriented surfaces (possibly with boundary), with $M$ compact, and let $f:M\to N$ be a smooth map. We set
\[
N_{\text{reg}}(f) :=\{y\in N |\, y \text{ is a regular value of } f\}.
\]
Then $N\setminus{N_{\text{reg}}(f)}$ has measure zero. For every $y\in N_{\text{reg}}(f)\setminus f(\partial M)$ the set $f^{-1}(y)$ is finite and the degree of $f$ relative to $y$ is the integer
\begin{equation}\label{Def:degree}
\deg(f,M,y)=\sum_{x\in f^{-1}(y)}\sgn \det df(x) \in \mathbb{Z}.
\end{equation}
Furthermore, the function $y\mapsto\deg(f,M,y)$ extends to a locally constant function on $N\setminus{f(\partial M)}$, and for $\eta$ any $2$-form on $N$ we have the identity
\begin{equation}\label{degreeProperty}
    \int_M f^*\eta=\int_N \deg(f,M,y)\, \eta(y).
\end{equation}

We shall now show that $I_\phi^e(x,y)$ coincides with the degree of a  suitable map between two oriented surfaces. Let $x\in \D$ and $e\in\partial\D$ be as above and  consider  the smooth map
\[
    \rho:S_\phi^e(x) \to        \D, \qquad \rho(z,t):=(\phi^t)^{-1}(z).
\]
Note that the boundary of $S_\phi^e(x)$ consists of the two curves $\Gamma_{\phi}(x)$ and $\Gamma_{\phi}(e)=\{e\}\times [0,1]$ and of the line segments $[x,e]\times \{0\}$ and $[\phi(x),e]\times \{1\}$. The curves $\Gamma_{\phi}(x)$ and $\Gamma_{\phi}(e)$ are mapped into $x$ and $e$ by $\rho$, whereas $[x,e]\times \{0\}$ and $[\phi(x),e]\times \{1\}$ are mapped into $[x,e]$ and $\phi^{-1}([\phi(x),e])$, respectively. We conclude that
\[
\rho(\partial S_{\phi}^e(x)) = [x,e] \cup \phi^{-1}([\phi(x),e]).
\]
In the following result, the disk $\D$ is given its standard orientation and the surface $S_\phi^e(x)$ the orientation that is induced by the parametrization (\ref{reparS}), meaning that
\begin{equation}
\label{oriented-basis}
\partial_t \Psi(t,r) = \bigl( (1-r) X_t(\phi^t(x)),1\bigr), \qquad \partial_r \Psi(t,r) = \bigl( e-\phi^t(x),0 \bigr)
\end{equation}
is a {positively} oriented basis of $T_{\Psi(t,r)} S_\phi^e(x)$, for every $(t,r)\in [0,1]\times [0,1]$.   {With respect to these orientations the degree of the map $\rho$ is well 
defined and at regular values is given by the formula (\ref{Def:degree}).    Recall that the intersection number $I_\phi^e(x,y)$ of the curve $\Gamma_{\phi}(y)$ with the surface $S_\phi^e(x)$ 
is defined with respect to the co-orientation in (\ref{co-orientation}).}

\begin{lem}\label{transverseregular}
The point $y\in \D$ is a regular value for $\rho$ if and only if the curve $\Gamma_\phi(y)$ meets $S_\phi^e(x)$ transversally. 
{Moreover, whenever $y$ is a regular value for $\rho$,} we have the identity 
\begin{equation}
    \deg(\rho,S_\phi^e(x),y)=I_\phi^e(x,y).  
\end{equation}
\end{lem}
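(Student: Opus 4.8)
The plan is to establish the two claims in turn: first the equivalence between $y$ being a regular value of $\rho$ and the transversality of $\Gamma_\phi(y)$ with $S_\phi^e(x)$, and then the equality of the degree with the intersection number. For the first claim, I would unwind the definition of $\rho$. A point $z = (\phi^t(y'),t) \in S_\phi^e(x)$ (with $y'$ on the segment $[x,e]$ read through the parametrization $\Psi$) satisfies $\rho(z) = y$ exactly when $(\phi^t)^{-1}(z) = y$, i.e.\ when $z = (\phi^t(y),t)$, which is precisely a point of $\Gamma_\phi(y) \cap S_\phi^e(x)$. So $\rho^{-1}(y)$ is in natural bijection with $\Gamma_\phi(y) \cap S_\phi^e(x)$. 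The differential $d\rho$ at such a point is the restriction of $d(\phi^t)^{-1}$ composed with the projection forgetting the $t$-coordinate direction along which $\phi^t$ varies; I would check that $d\rho$ is surjective onto $T_y\D$ at the intersection point if and only if $T_p S_\phi^e(x)$ together with the tangent line $T_p\Gamma_\phi(y)$ spans $T_p(\D\times[0,1])$. The clean way to see this: since $(\phi^t)^{-1}$ is a diffeomorphism in the $\D$-directions for each fixed $t$, the map $\rho$ fails to be submersive at $p$ exactly when $T_pS_\phi^e(x)$ is "vertical enough" that its image under $d\rho$ is only one-dimensional, and one computes that this happens iff $T_p\Gamma_\phi(y) \subset T_p S_\phi^e(x)$, i.e.\ iff the intersection is non-transverse.

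For the degree equality, the key observation is that both quantities are computed as signed counts over the same finite set $\rho^{-1}(y) \leftrightarrow \Gamma_\phi(y)\cap S_\phi^e(x)$, so it suffices to check that the local signs agree at each point. On the degree side, the sign at $p$ is $\sgn\det d\rho(p)$ with respect to the orientation of $S_\phi^e(x)$ given by the ordered basis $(\partial_t\Psi, \partial_r\Psi)$ in (\ref{oriented-basis}) and the standard orientation of $\D$. On the intersection side, the sign at $p$ is $+1$ iff the orientation of $T_p\Gamma_\phi(y)$ agrees with the co-orientation $(i(e-\phi^t(x)),0)$ of (\ref{co-orientation}), equivalently — using the remark in the excerpt that this co-orientation is the one induced by the orientations of $S_\phi^e(x)$ and $\D\times[0,1]$ — iff appending a positively-oriented tangent vector of $\Gamma_\phi(y)$ to a positively-oriented basis of $T_p S_\phi^e(x)$ yields a positively-oriented basis of $T_p(\D\times[0,1])$. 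So I need: $\sgn\det d\rho(p) = +1 \iff \bigl(\partial_t\Psi,\partial_r\Psi,\dot\Gamma_\phi(y)\bigr)$ is positively oriented in $\D\times[0,1]$.

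To verify this, I would write things in coordinates at $p = (\phi^{t_0}(y), t_0)$. The tangent vector to $\Gamma_\phi(y)$ is $\dot\Gamma_\phi(y) = (X_{t_0}(\phi^{t_0}(y)),1)$. Since $\rho(z,t) = (\phi^t)^{-1}(z)$ and $\rho$ collapses $\Gamma_\phi(y)$ direction, it is convenient to precompose $\rho$ with the chart $\Psi$: the map $(t,r)\mapsto \rho(\Psi(t,r)) = (\phi^t)^{-1}(\phi^t(x) + r(e-\phi^t(x)))$ sends $(t_0,r_0)$ to $y$, and $\sgn\det d\rho(p)$ equals the sign of the Jacobian of this composite $\mathbb{R}^2 \to \mathbb{R}^2$. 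Now I would use that $d(\phi^t)^{-1}$ is area-preserving hence orientation-preserving on $\D$, so the sign of that Jacobian equals the sign of the determinant of the pair $\bigl(\partial_t[\phi^t(x)+r(e-\phi^t(x))] + (\text{correction from }\partial_t(\phi^t)^{-1}),\ e-\phi^t(x)\bigr)$; pushing forward the whole configuration by $d\phi^{t_0}$ and tracking the $t$-direction converts this precisely into the orientation of $\bigl(\partial_t\Psi,\partial_r\Psi,\dot\Gamma_\phi(y)\bigr)$ in the $3$-manifold. I expect the main obstacle to be exactly this bookkeeping: keeping straight how the "$+1$" in the last coordinate of $\partial_t\Psi$ and of $\dot\Gamma_\phi(y)$ interacts with the pushforward by $d(\phi^t)^{-1}$ when one passes from a $2\times 2$ Jacobian sign to a $3\times 3$ orientation, and making sure no spurious sign is introduced by the order $(t,r)$ versus $(r,t)$ or by the direction of the co-orientation vector. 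Once the sign matches at one point it matches everywhere by continuity and the locally-constant property, and then (\ref{Def:degree}) gives $\deg(\rho,S_\phi^e(x),y) = \sum_{p} \sgn = I_\phi^e(x,y)$, completing the proof.
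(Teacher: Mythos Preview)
Your proposal is correct and follows essentially the same approach as the paper: identify $\rho^{-1}(y)$ with $\Gamma_\phi(y)\cap S_\phi^e(x)$ via the extension $\tilde\rho(z,t)=(\phi^t)^{-1}(z)$ and its kernel $\R\,(X_t(z),1)$, then compare the local sign of $d(\rho\circ\Psi)$ with the orientation of the triple $(\partial_t\Psi,\partial_r\Psi,\dot\Gamma_\phi(y))$ using that $d(\phi^t)^{-1}$ is area-preserving. One caveat: your closing remark that ``once the sign matches at one point it matches everywhere by continuity and the locally-constant property'' is misleading---the sign can (and will) differ from preimage to preimage, so the equality of the degree-contribution and the intersection-sign must be established by the algebraic computation at a \emph{generic} point, exactly as you set up; continuity in $y$ plays no role here.
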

\begin{proof}
Set for simplicity $S:= S_\phi^e(x)$. The map $\rho$ is the restriction to the surface $S$ of the map
\[
\tilde{\rho} : \D \times [0,1] \rightarrow \D, \qquad (z,t) \mapsto (\phi^t)^{-1}(z).
\]
The restriction of $\tilde{\rho}$ to $\D \times \{t\}$ is a diffeomorphism for every $t\in [0,1]$ and hence $\tilde{\rho}$ is a submersion. The inverse image of each $y\in \D$ by $\tilde{\rho}$ is the curve
\[
\tilde{\rho}^{-1}(y) = \Gamma_{\phi}(y),
\] 
and hence if $\tilde{\rho}(z,t)=y$ we have
\begin{equation}
\label{line}
\ker d\tilde{\rho}(z,t) = T_{(z,t)} \Gamma_\phi(y) = \R \tilde{X}(z,t),
\end{equation}
where 
\[
\tilde{X}(z,t) := \bigl( X_t(z),1 \bigr).
\]
These considerations imply that $(z,t)\in S$ belongs to $\rho^{-1}(y)$ if and only if $\Gamma_\phi(y)$ and $S$ meet at $(z,t)$.    Fix such a point $(z,t)\in \Gamma_\phi(y)\cap S$.    
Then the range of $d\rho(z,t)$ equals the range of $d\tilde{\rho}(z,t)|_{T_{(z,t)}S}$ which is surjective if and only if it coincides with the range of $d\tilde{\rho}(z,t)$, 
which, since the latter is a submersion, is the case if and only if the kernel of $d\tilde{\rho}(z,t)$ is transverse to $T_{(z,t)}S$.   From (\ref{line}), this shows that $y\in \D$ is a regular value for 
$\rho$ if and only if the curve $\Gamma_\phi(y)$ meets $S$ transversally.   

To prove the second part of the Lemma, we fix a regular value $y\in \D$ for $\rho$ and we fix a point
\[
(z,t)\in \rho^{-1}(y) = \Gamma_\phi(y) \cap S.
\]
We wish to prove that this intersection point is positive if and only if $d\rho(z,t)$ is orientation preserving.  Once this is proven, the last claim of the Lemma follows from the formula 
(\ref{Def:degree}) for the degree with respect to a regular value.   

Recall $\Psi$ the parametrization of $S$ given by (\ref{reparS}).   This gives us the basis 
\[
					(v,1):=\partial_t\Psi(t,r)=\Big((1-r)X_t(\phi^t(x)),1\Big)\qquad (u,0):=\partial_r\Psi(t,r)=\Big(e-\phi^t(x),0\Big)
\]
for the tangent space $T_{\Psi(t,r)}S$, while  $T_{\Psi(t,r)}\Gamma_\phi(y)$ is generated by the non-zero vector
\[
(w,1):=\Big(X\big(\phi^t(x) + r(e-\phi^t(x))\big),1\Big).
\]

The co-orientation of $S$ at $(z,t)$ is given as in (\ref{co-orientation}) by the vector $(iu,0)$.   The intersection at $\Psi(t,r)$ is by definition positive, precisely 
when the two co-orientations $(iu,0)$ and $(w,1)$ of $T_{\Psi(t,r)}S$ agree.   In other words, precisely when 
the two bases $\big((v,1),(u,0),(w,1)\big)$ and $\big((v,1),(u,0),(iu,0)\big)$ determine the same orientation for $ T_{(z,t)}{(\D\times[0,1])}$.  
{By the discussion following (\ref{co-orientation}) the latter is the standard orientation on $\D\times[0,1]$.}   
Hence it is enough to prove that $\big((v,1),(u,0),(w,1)\big)$ is a positive basis if and only if $d\rho(z,t)$ is orientation preserving with respect to the basis 
$\big((v,1),(u,0)\big)$ for $T_{\Psi(t,r)}S$.    The basis $\bigl((v,1),(u,0),(w,1)\bigr)$ is positive if and only if the following determinants are positive
\[
 \det\left(  \begin{array}{cc} v  & 1  \\ u  & 0 \\ w & 1 \end{array} \right) =  \det \left(  \begin{array}{cc} v-w  & 0 \\ u & 0 \\ w & 1 \end{array} \right) =  \det \left(  \begin{array}{cc} v-w   \\ u  \end{array} \right).
\]
Since $\phi$ is isotopic to the identity, so $d(\phi^t)^{-1}(z,t)$ must be symplectic and therefore
\[
\det \left(  \begin{array}{cc} v-w   \\ u  \end{array} \right) = \det \left(  \begin{array}{cc}  d(\phi^t)^{-1}(z,t)[v-w]   \\  d(\phi^t)^{-1}(z,t)[u]  \end{array} \right)= \det d\hat{\rho}(t,r)
\]
where $\det d\hat{\rho}(t,r)$ is taken with respect to the standard basis of $\R^2$ and the basis $\partial_t,\partial_r$ of $T_{(t,r)}[0,1]^2$.
The last equality above follows from the computation of the tangent map $d\hat{\rho}(t,r):T_{(t,r)}[0,1]^2 \to T_y \D$ as 
\begin{align*}
    d\hat{\rho}(t,r)[\partial_{r}]&=d(\phi^t)^{-1}\big(\phi^t(x) + r(e-\phi^t(x))\big)[e-\phi^{t}(x)]\\
    &=d(\phi^t)^{-1}(z,t)[u],\\
    d\hat{\rho}(t,r)[\partial_t]&=-X\left((\phi^t)^{-1}\big(\phi^t(x) + r(e-\phi^t(x))\big)\right)\\
    &\hspace{50pt}+(1-r)d(\phi^t)^{-1}\big(\phi^t(x) + r(e-\phi^t(x))\big)[X(\phi^{t}(x))]\\
                &=d(\phi^t)^{-1}\big(\phi^t(x) + r(e-\phi^t(x))\big)[-X(\phi^t(x) + r(e-\phi^t(x))\\
                &\hspace{183pt} +(1-r)X(\phi^{t}(x))]\\
                &= d(\phi^t)^{-1}(z,t)[v-w].
\end{align*}
Since $d\hat{\rho}(t,r)$ is orientation preserving if and only if $d\rho(z,t)$ also is, we conclude $\tilde{X}(z,t)$  positively intersects $S$ at $(z,t)$ if and only if  $d\rho(z,t)$ is orientation preserving.
\end{proof}
\begin{rem} 
\label{ae}
The above lemma and Sard's theorem imply that $\Gamma_\phi(y)$ meets $S_\phi^e(x)$ transversally for almost every $y\in \D$. Together with the properties of the oriented degree mentioned above, this lemma  implies also that the function $I_\phi^e(x,\cdot)$ extends to a locally constant function on $\D \setminus ([x,e] \cup \phi^{-1}([\phi(x),e]))$. Of course, this extension could also be obtained by defining the intersection number of an oriented curve and a co-oriented surface without assuming transversality, but just the condition that there are no intersections on either of the two boundaries, by the usual perturbation argument. Actually, we shall not need this extension of the function $y\mapsto I_{\phi}^e(x,y)$: It will be enough to know that it is well-defined for almost every $y\in \D$ and that the bound of Proposition \ref{Bound1} and the identity of Lemma \ref{transverseregular} hold.
\end{rem} 

\section{Asymptotic Action \& Asymptotic winding number.}
In this section we define the asymptotic versions of the action and winding number introduced in the previous section.These definitions build on Birkhoff's ergodic theorem, which we shall use in the following form.

\begin{thm*}[Birkhoff's Ergodic Theorem]
If $\varphi$ is an endomorphism of a finite measure space $(\Omega, \mathcal{A}, \mu)$ and if $f\in 
L^1(\Omega,\mathcal{A},\mu)$, then the averages 
\[
A_n f=\frac{1}{n}\sum_{i=0}^{n-1}f\circ \varphi^i
\]
converge $\mu-$a.e. and in $L^1(\Omega,\mathcal{A},\mu)$ to some $\varphi$-invariant function $\Bar{f}$. Furthermore for each $\varphi$-invariant $A\in \mathcal{A}$ 
\begin{equation}
    \int_{A}\Bar{f}d\mu=\int_A fd\mu.
\end{equation}
\end{thm*}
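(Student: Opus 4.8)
The statement is the Birkhoff ergodic theorem, and the plan is the classical one: reduce everything to the \emph{maximal ergodic theorem}, then extract a.e.\ convergence, then $L^1$ convergence, then the integral identity, using only that $\mu(\Omega)<\infty$. Write $S_nf=\sum_{i=0}^{n-1}f\circ\varphi^i$, so $A_nf=\tfrac1nS_nf$, and recall the cocycle relation $S_nf=f+(S_{n-1}f)\circ\varphi$. The first step is the maximal inequality: for every $g\in L^1(\Omega,\mathcal A,\mu)$,
\[
\int_{\{\sup_{n\ge1}S_ng>0\}}g\,d\mu\ \ge\ 0.
\]
Following Garsia, put $M_N=\max_{0\le n\le N}S_ng\ge 0$. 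On $B_N=\{M_N>0\}$ the cocycle relation gives $M_N=g+\bigl(\max_{0\le m\le N-1}S_mg\bigr)\circ\varphi\le g+M_N\circ\varphi$, hence $g\ge M_N-M_N\circ\varphi$ there. Integrating over $B_N$, using that $M_N$ vanishes off $B_N$, that $M_N\circ\varphi\ge 0$, and that $\varphi$ preserves $\mu$, yields $\int_{B_N}g\,d\mu\ge\int_\Omega M_N\,d\mu-\int_\Omega M_N\circ\varphi\,d\mu=0$; letting $N\to\infty$ with dominating function $|g|$ gives the claim.

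The second step derives a.e.\ convergence. Set $\overline{f}^{*}=\limsup_nA_nf$ and $\overline{f}_{*}=\liminf_nA_nf$. Since $A_nf\circ\varphi-A_nf=\tfrac1n(f\circ\varphi^n-f)$ and a Borel--Cantelli argument (from $\sum_n\mu(|f|>\varepsilon n)<\infty$) shows $\tfrac1nf\circ\varphi^n\to 0$ a.e., both $\overline{f}^{*}$ and $\overline{f}_{*}$ are $\varphi$-invariant. Suppose $\mu(\overline{f}_{*}<\overline{f}^{*})>0$ and choose rationals $\alpha<\beta$ so that $E=\{\overline{f}_{*}<\alpha\}\cap\{\overline{f}^{*}>\beta\}$ has positive (and finite) measure. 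Then $E$ is $\varphi$-invariant, so $\varphi$ restricts to a $\mu$-preserving endomorphism of $(E,\mathcal A|_E,\mu|_E)$; applying the maximal inequality there to $f-\beta$ — valid because $\overline{f}^{*}>\beta$ on $E$ forces $\{\sup_{n\ge1}S_n(f-\beta)>0\}$ to be all of $E$ — gives $\int_Ef\,d\mu\ge\beta\,\mu(E)$, and applying it to $\alpha-f$ gives $\int_Ef\,d\mu\le\alpha\,\mu(E)$. As $0<\mu(E)<\infty$, this forces $\beta\le\alpha$, a contradiction. Hence $A_nf$ converges a.e.\ to a $\varphi$-invariant function $\overline{f}$, and Fatou's lemma gives $\|\overline{f}\|_1\le\liminf_n\|A_nf\|_1\le\|f\|_1<\infty$.

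The third step upgrades to $L^1$ convergence and reads off the integral identity. For bounded $f$, a.e.\ convergence together with $|A_nf|\le\|f\|_\infty$ and $\mu(\Omega)<\infty$ gives $A_nf\to\overline{f}$ in $L^1$ by dominated convergence. For general $f\in L^1$ and $\varepsilon>0$, pick a bounded $g$ with $\|f-g\|_1<\varepsilon$; then $\|A_nf-A_ng\|_1\le\varepsilon$ for all $n$, $\|\overline{f}-\overline{g}\|_1\le\varepsilon$ by Fatou applied to $f-g$, and $A_ng\to\overline{g}$ in $L^1$, so $\limsup_n\|A_nf-\overline{f}\|_1\le 2\varepsilon$; as $\varepsilon$ is arbitrary, $A_nf\to\overline{f}$ in $L^1$. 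Finally, if $A\in\mathcal A$ is $\varphi$-invariant, then $\int_Af\circ\varphi^i\,d\mu=\int_Af\,d\mu$ for every $i$ (change of variables, using $\varphi^{-1}A=A$), so $\int_AA_nf\,d\mu=\int_Af\,d\mu$ for all $n$; passing to the $L^1$ limit gives $\int_A\overline{f}\,d\mu=\int_Af\,d\mu$.

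The main obstacle is the maximal inequality and, above all, its correct deployment in the second step: one must restrict to the invariant set $E$, apply the inequality to the shifted functions $f-\beta$ and $\alpha-f$, and check that on $E$ the corresponding maximal sets are all of $E$. Everything else is routine measure theory, with $\mu(\Omega)<\infty$ being exactly what makes the bounded-to-general approximation and the final interchange of limit and integral go through.
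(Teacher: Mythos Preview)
The paper does not prove this statement: Birkhoff's ergodic theorem is stated without proof as a classical tool on which the rest of Section~3 is built. So there is no ``paper's own proof'' to compare against.

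That said, your argument is the standard classical route (Garsia's maximal ergodic inequality $\Rightarrow$ a.e.\ convergence via the $\alpha<\beta$ rational-slicing contradiction $\Rightarrow$ $L^1$ convergence by bounded approximation $\Rightarrow$ the integral identity on invariant sets), and it is correct as written. The delicate point you flagged---restricting to the invariant set $E$ and checking that the maximal sets for $f-\beta$ and $\alpha-f$ exhaust $E$---is handled properly, and your use of $\mu(\Omega)<\infty$ is exactly where it is needed (dominated convergence for bounded $f$, and finiteness of $\mu(E)$ so that $\beta\,\mu(E)\le\int_E f\le\alpha\,\mu(E)$ is a genuine contradiction). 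One cosmetic remark: the invariance of $\overline f^{*}$ and $\overline f_{*}$ already follows from $(A_nf)\circ\varphi=\tfrac{n+1}{n}A_{n+1}f-\tfrac1n f$, so the Borel--Cantelli detour for $\tfrac1n f\circ\varphi^n\to0$ is not strictly necessary, though your version is also fine.
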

\subsection{Asymptotic Action}
In our setup we will work with the space $(\D, \mathcal{B}, \mu)$ where $\mathcal{B}$ is the Borel $\sigma$-algebra on $\D$ and $\mu$ is the Lebesgue measure.
\begin{defn}
Let $\phi \in \text{Diff}_c(\D, \omega_0)$ and let $\lambda$ be a smooth primitive of $\omega_0$ on $\D$. With $a_{\phi, \lambda}$ as presented before  we define the {\em asymptotic action} of $\phi$ with respect to $\lambda$ as the limit  
\[
a^\infty_{\phi, \lambda}(z)=\lim_{n\rightarrow\infty}\frac{a_{\phi^n, \lambda}(z)}{n}.
\]
By Lemma \ref{formule}(ii) we can write 
\begin{equation}
\label{AdditiveAction}
\frac{a_{\phi^n, \lambda}(z)}{n}=\frac{1}{n}\sum_{j=0}^{n-1}a_{\phi, \lambda}(\phi^j(z))
\end{equation}
and then taking $n\to \infty$ and applying Birkhoff's ergodic theorem  we can ensure that the limit $a^\infty_{\phi, \lambda}(z)$ exists for almost every $z\in \D$. Furthermore, Birkhoff's theorem guarantees that 
\begin{equation}\label{BirkAction}
    \int_{\D}a^\infty_{\phi, \lambda}(z)\, \omega_0(z)=\int_\D a_{\phi, \lambda}(z)\, \omega_0(z).
\end{equation}
\end{defn}
The first thing to remark is that the asymptotic action does not depend on the primitive $\lambda$ of $\omega_0$.
\begin{prop}
The asymptotic action $a^\infty_{\phi, \lambda}$ is independent of the primitive $\lambda$ of $\omega_0$.
\end{prop}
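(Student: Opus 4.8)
The plan is to show that changing the primitive from $\lambda$ to $\lambda' = \lambda + du$ (for an arbitrary smooth function $u$ on $\D$) changes the asymptotic action by a quantity that vanishes almost everywhere. By Lemma \ref{formule}(i) applied to $\phi^n$, we have
\[
a_{\phi^n, \lambda + du}(z) = a_{\phi^n, \lambda}(z) + u(\phi^n(z)) - u(z).
\]
Dividing by $n$ and letting $n\to\infty$, the difference between the two asymptotic actions is
\[
a^\infty_{\phi, \lambda+du}(z) - a^\infty_{\phi, \lambda}(z) = \lim_{n\to\infty} \frac{u(\phi^n(z)) - u(z)}{n},
\]
so it suffices to prove that this limit exists and equals zero for almost every $z\in\D$.

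The key observation is that $u$ is a smooth function on the compact set $\D$, hence bounded: there is a constant $M$ with $|u|\le M$ on $\D$. Therefore
\[
\left| \frac{u(\phi^n(z)) - u(z)}{n} \right| \le \frac{2M}{n} \xrightarrow[n\to\infty]{} 0
\]
for \emph{every} $z\in\D$, not merely almost every $z$. Combined with the fact that $a^\infty_{\phi,\lambda}$ exists a.e.\ (established in the preceding definition via Birkhoff's theorem), this shows $a^\infty_{\phi,\lambda+du}$ exists a.e.\ and coincides with $a^\infty_{\phi,\lambda}$ a.e. Since any two primitives of $\omega_0$ on the simply connected domain $\D$ differ by an exact form $du$, this proves independence of the primitive.

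There is essentially no obstacle here: the only mild subtlety is bookkeeping about the exceptional null sets. One should note that the a.e.\ set on which $a^\infty_{\phi,\lambda}$ is defined may in principle depend on $\lambda$, but the displayed telescoping identity shows that on the intersection of the (co-null) domains of convergence for $\lambda$ and for $\lambda+du$ the two limits agree, and in fact the boundedness argument shows the domain of convergence is the same for all primitives. In writing this up I would simply invoke Lemma \ref{formule}(i), the telescoping cancellation, and the uniform bound on the bounded function $u$, concluding in two or three lines.
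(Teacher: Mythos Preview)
Your proof is correct and follows essentially the same approach as the paper: both arguments reduce to the identity $a_{\phi^n,\lambda+du}(z) - a_{\phi^n,\lambda}(z) = u(\phi^n(z)) - u(z)$ and then use the uniform boundedness of $u$ on $\D$ to kill this term after dividing by $n$. The only cosmetic difference is that you apply Lemma~\ref{formule}(i) directly to $\phi^n$, whereas the paper applies it to $\phi$, forms the Birkhoff sum, and observes that the extra terms telescope to $u(\phi^n(z))-u(z)$; the content is identical.
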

\begin{proof}
Let $\lambda+du$ be another primitive of $\omega$. Then by Lemma \ref{formule}(i) the action with respect to this primitive is
$$a_{\phi, \lambda+du}=a_{\phi, \lambda}+u\circ \phi-u.$$
We can now check the Birkhoff sum up to a finite order
$$\sum_{j=0}^{n-1} a_{\phi,  \lambda+du}(\phi^j(z))=\sum_{j=0}^{n-1} a_{\phi,  \lambda}(\phi^j(z)+\sum_{j=0}^{n-1} (u\circ \phi -u)(\phi^j(z)).$$
Note that the last term on the right hand side is a telescopic sum of the form $$\sum_{j=0}^{n-1} \bigl( u(\phi^{j+1}(z))-u(\phi^{j}(z)) \bigr) =u(\phi^{n}(z))-u(z), $$
and this is uniformly bounded for all $n$. Therefore when taking the Birkhoff average,  this last term goes to zero i.e. 
$$\lim_{n\rightarrow \infty}\frac{1}{n}\sum_{j=0}^{n-1} a_{\phi,  \lambda}(\phi^j(z))=\lim_{n\rightarrow \infty}\frac{1}{n}\sum_{j=0}^{n-1} a_{\phi,  \lambda+du}(\phi^j(z)).$$
This shows that $a_{\phi, \lambda}^\infty(z)$ does not depend on the primitive $\lambda$. More precisely, the existence and value of the limit of $a_{\phi^n,\lambda}(z)/n$ do not depend on the primitive $\lambda$.
\end{proof}

This lets us write from now on the asymptotic action as $a_\phi^\infty$. It readily follows from (\ref{AdditiveAction}) that if $z$ is a $k$-periodic point of the map $\phi$, then the asymptotic action of $\phi$ coincides with the average action of the orbit of $z$:
\begin{equation}
\label{actionPeriodic}
    a_{\phi}^{\infty}(z) = \frac{1}{k} \sum_{j=0}^{k-1} a_{\phi,\lambda}(\phi^j(z)).
\end{equation}
The average action of periodic points is the object of study of \cite{Hut}, in which Hutchings proves the existence of periodic points of $\phi$ whose action satisfies suitably bounds in terms of $\mathcal{C}(\phi)$.

\subsection{Asymptotic  Winding number.}\label{awn}
In the same way as with the action we now consider the asymptotic winding number.
\begin{defn}
The {\em asymptotic winding number} of $\phi\in \text{Diff}_c(\D, \omega_0)$ is the limit 
\[
W^\infty_\phi(x, y)=\lim_{n\rightarrow\infty} \frac{W_{\phi^n}(x, y)}{n}.
\]

\end{defn}
To guarantee its existence we  can also  use Birkhoff's ergodic theorem. This time we take $$(Q:=\D\times \D ,\, \mathcal{B}(\D\times\D),\, \mu_Q :=\mu\oplus \mu)$$ with the action of $\phi_Q:=(\phi, \phi) $. For $x\neq y$ in $\D$ the winding number satisfies 
\[
W_{\phi^n}(x, y)=\sum_{j=0}^{n-1}W_\phi(\phi_Q^j(x, y)), 
\]
and hence the time average 
\begin{align*}
   W_\phi^\infty(x, y)=&\lim_{n\rightarrow \infty}\frac{W_{\phi^n}(x, y)}{n}\\=&\lim_{n\rightarrow \infty}\frac{1}{n}\sum_{j=0}^{n-1}W_\phi(\phi_Q^j(x, y))
\end{align*}
exists for $\mu_Q$-a.e $(x, y)\in \D\times\D $.  Furthermore  Birkhoff's theorem ensures that the time and space averages coincide i.e.
\begin{equation}
\label{intW}
    \int_{\D\times\D}W^\infty_\phi(x, y)\, \omega_0(x) \wedge \omega_0(y)=\int_{\D\times\D}W_\phi(x, y) \, \omega_0(x) \wedge \omega_0(y).
\end{equation}
In particular $W_\phi^\infty$ is an integrable function and hence by the theorem of Fubini-Tonelli we obtain that the integral
\begin{equation}\label{windingIntegral}
    \int_{\D}W_\phi^\infty(x, y) \, \omega_0(y)
\end{equation}{}
is well-defined for almost every $x\in \D$ and defines an integrable function of $x$.

\begin{rem}
\label{periodic}
If $x$ is a $k$-periodic point of $\phi$, then the sequence of functions
\[
y \mapsto \frac{W_{\phi}^n(x,y)}{n}
\]
converges to $W_{\phi}^{\infty}(x,y)$ for almost every $y\in \D$ and in $L^1(\D)$. Indeed, the identity
\[
W_{\phi^{kh}}(x,y) = \sum_{j=0}^{h-1} W_{\phi^k}(\phi^{kj}(x),\phi^{kj}(y)) = \sum_{j=1}^{h-1} W_{\phi^k}(x,\phi^{kj}(y))
\]
shows that this function of $y$ is the Birkhoff sum of the function $y\mapsto W_{\phi^k}(x,y)$ with respect to the map $\phi^k$. By Birkhoff's ergodic theorem, there is an integrable function $w:\D \rightarrow \R$ such that
\[
\frac{W_{\phi^{kh}}(x,y)}{h} \rightarrow w(y)
\]
for almost every $y\in \D$ and in $L^1(\D)$. Together with the uniform bound
\[
\left| W_{\phi^{k \lceil \frac{n}{k} \rceil}} (x,y) - W_{\phi^n}(x,y) \right| = \left| \sum_{j=n}^{k \lceil \frac{n}{k} \rceil - 1} W_{\phi}(\phi^j(x),\phi^j(y)) \right| \leq k \|W_{\phi}\|_{\infty},
\]
this implies that the sequence of functions
\[
y \mapsto \frac{W_{\phi}^n(x,y)}{n}
\]
converges to $W_{\phi}^{\infty}(x,y):= w(y)/k$ for almost every $y\in \D$ and in $L^1(\D)$.
\end{rem}


\section{Main results}

Let $\phi\in \text{Diff}_c(\D, \omega_0)$ and let $\phi^t$ be a Hamiltonian isotopy such that $\phi^0=\mathrm{id}$ and $\phi^1=\phi$. We fix $x$ in the interior of $\D$, a point $e\in \partial \D$ and we define the surface $S^e_\phi(x)\subset \D\times [0,1]$ as in (\ref{reparS}). Given $y\in \D \setminus \{x\}$ such that the curve $\Gamma_\phi(y) = \{(\phi^t(y),t) \mid t\in [0,1]\}$ meets $S^e_\phi(x)$ transversally, we denote by $I_\phi^e(x,y)$ the intersection number of $\Gamma_\phi(y)$ with $S^e_\phi(x)$, as in Section \ref{wiidd}. As observed in Remark \ref{ae}, $I_\phi^e(x,\cdot)$ is defined on a full measure subset of $\D$.

\begin{prop}
\label{actionvsintersection}

The action of $x$ with respect to a primitive $\lambda$ of $\omega_0$ and the intersection numbers $I^e_\phi(x,\cdot)$ are related by the identity
\begin{equation}\label{actionvsInt}
    a_{\phi,\lambda}(x)=\int_\D I_\phi^e(x,y)\, \omega_0(y)-\int_{[e,x]}\lambda+\int_{[e,\phi(x)]}\lambda,
\end{equation}
where $[e,x]$ and $[e,\phi(x)]$ denote oriented line segments in $\D$.
\end{prop}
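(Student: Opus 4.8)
The plan is to compute the integral $\int_\D f^*\eta$ for a well-chosen map $f$ and $2$-form $\eta$, using the degree formula \eqref{degreeProperty} together with the identification $\deg(\rho,S_\phi^e(x),y)=I_\phi^e(x,y)$ from Lemma \ref{transverseregular}. Take $f=\rho:S_\phi^e(x)\to\D$, $\rho(z,t)=(\phi^t)^{-1}(z)$, and take $\eta=\lambda\wedge? $ — more precisely take $\eta=\omega_0$. Then \eqref{degreeProperty} gives
\[
\int_{S_\phi^e(x)} \rho^*\omega_0 = \int_\D \deg(\rho,S_\phi^e(x),y)\,\omega_0(y) = \int_\D I_\phi^e(x,y)\,\omega_0(y),
\]
where in the last equality I use Lemma \ref{transverseregular} and the fact (Remark \ref{ae}) that the two sides agree a.e. in $y$, and that the boundary $\rho(\partial S_\phi^e(x))=[x,e]\cup\phi^{-1}([\phi(x),e])$ has measure zero. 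So the right-hand quantity $\int_\D I_\phi^e(x,y)\,\omega_0(y)$ equals $\int_{S_\phi^e(x)}\rho^*\omega_0$, and the task reduces to showing
\[
\int_{S_\phi^e(x)} \rho^*\omega_0 = a_{\phi,\lambda}(x) + \int_{[e,x]}\lambda - \int_{[e,\phi(x)]}\lambda.
\]

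To evaluate $\int_{S_\phi^e(x)}\rho^*\omega_0$ I would write $\omega_0=d\lambda$ (so $\rho^*\omega_0 = d(\rho^*\lambda)$) and apply Stokes' theorem on the oriented square $[0,1]^2$ via the parametrization $\Psi$ of \eqref{reparS}, being careful with the orientation convention \eqref{oriented-basis}. This turns the surface integral into a sum of four line integrals of $\rho^*\lambda$ over the four edges of the boundary of $S_\phi^e(x)$: the curve $\Gamma_\phi(x)$ (which $\rho$ collapses to the point $x$, contributing $0$), the curve $\Gamma_\phi(e)=\{e\}\times[0,1]$ (which $\rho$ collapses to $e$, contributing $0$), the bottom segment $[x,e]\times\{0\}$ (on which $\rho$ is the identity, so it contributes $\pm\int_{[x,e]}\lambda$), and the top segment $[\phi(x),e]\times\{1\}$ (on which $\rho=\phi^{-1}$, so it contributes $\pm\int_{\phi^{-1}([\phi(x),e])}\lambda = \pm\int_{[x,\phi^{-1}(e)]}\phi^*\lambda$, but since $e\in\partial\D$ is fixed this is $\pm\int_{[x,e]}\phi^*\lambda$). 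Combining and using $da_{\phi,\lambda}=\phi^*\lambda-\lambda$ together with the boundary normalization $a_{\phi,\lambda}(e)=0$, the two segment contributions combine to $-a_{\phi,\lambda}(x)$ up to the correction terms $\pm\int_{[e,x]}\lambda$ and $\pm\int_{[e,\phi(x)]}\lambda$ coming from the fact that $\rho$ on the top edge is parametrized over $[\phi(x),e]$ rather than over its $\phi^{-1}$-image. Bookkeeping of the endpoints and orientations then yields the stated formula.

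The main obstacle I anticipate is purely the orientation/sign bookkeeping in the Stokes computation: getting the induced orientation on $\partial S_\phi^e(x)$ right from \eqref{oriented-basis}, keeping track of which edges are traversed forwards versus backwards, and correctly handling the reparametrization on the top edge where $\rho=\phi^{-1}$ maps $[\phi(x),e]\times\{1\}$ onto $\phi^{-1}([\phi(x),e])$ — one must rewrite $\int_{\phi^{-1}([\phi(x),e])}\lambda$ in terms of $\int\phi^*\lambda$ over $[x,e]$ (or over a segment with the right endpoints) and then peel off $\int da_{\phi,\lambda}=a_{\phi,\lambda}(x)-a_{\phi,\lambda}(e)=a_{\phi,\lambda}(x)$. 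A secondary, minor point is justifying the measure-zero exceptional set: $I_\phi^e(x,\cdot)$ is only defined where $\Gamma_\phi(y)$ is transverse to $S_\phi^e(x)$, but by Remark \ref{ae} and Lemma \ref{transverseregular} this is a full-measure set on which it agrees with $\deg(\rho,S_\phi^e(x),\cdot)$, so the integral $\int_\D I_\phi^e(x,y)\,\omega_0(y)$ is unambiguous and \eqref{degreeProperty} applies verbatim.
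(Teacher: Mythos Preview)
Your overall strategy is sound and, once executed, gives a proof that is genuinely different from the paper's. Both arguments identify $\int_\D I_\phi^e(x,y)\,\omega_0(y)$ with $\int_{S_\phi^e(x)}\rho^*\omega_0$ via Lemma~\ref{transverseregular} and \eqref{degreeProperty}, and both then evaluate this surface integral by Stokes. The difference is in the primitive used. The paper introduces the lifted forms $\tilde\lambda=\lambda+H_t\,dt$ and $\tilde\omega_0=d\tilde\lambda$ on $\D\times[0,1]$, proves the identity $\rho^*\omega_0=\tilde\omega_0|_{S_\phi^e(x)}$ from the flow-invariance of $\tilde\omega_0$, and then integrates $\tilde\lambda$ around $\partial S_\phi^e(x)$: here the action $a_{\phi,\lambda}(x)$ emerges from the edge $\Gamma_\phi(x)$ via the Hamiltonian formula \eqref{actionH}, while the two horizontal segments give $\int_{[e,x]}\lambda$ and $-\int_{[e,\phi(x)]}\lambda$. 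Your route avoids the Hamiltonian entirely, pulling back $\lambda$ by $\rho$ instead; the edges $\Gamma_\phi(x)$ and $\Gamma_\phi(e)$ then collapse to points and contribute nothing, and the action appears from the top segment through the defining relation $da_{\phi,\lambda}=\phi^*\lambda-\lambda$. This is arguably more elementary; the paper's computation, on the other hand, makes the link to the Hamiltonian and the geometric role of $\tilde\omega_0$ more transparent.

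There is, however, one concrete slip in your write-up. On the top edge $[\phi(x),e]\times\{1\}$ you correctly note that $\rho=\phi^{-1}$, so the contribution is $\int_{[\phi(x),e]}(\phi^{-1})^*\lambda=\int_{\phi^{-1}([\phi(x),e])}\lambda$. Your claimed next step, rewriting this as $\int_{[x,e]}\phi^*\lambda$, is a change-of-variables error: $\int_{[x,e]}\phi^*\lambda=\int_{\phi([x,e])}\lambda$, which is an integral over a different curve altogether. The correct (and simpler) manipulation stays on the segment $[\phi(x),e]$: since $(\phi^{-1})^*\lambda-\lambda=da_{\phi^{-1},\lambda}=-d(a_{\phi,\lambda}\circ\phi^{-1})$ by Lemma~\ref{formule}(iii), one gets
\[
\int_{[\phi(x),e]}(\phi^{-1})^*\lambda=\int_{[\phi(x),e]}\lambda-\bigl(a_{\phi,\lambda}(\phi^{-1}(e))-a_{\phi,\lambda}(x)\bigr)=\int_{[\phi(x),e]}\lambda+a_{\phi,\lambda}(x),
\]
using $\phi^{-1}(e)=e$ and $a_{\phi,\lambda}(e)=0$. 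Combining this with the bottom-edge contribution $\int_{[e,x]}\lambda$ (once the boundary orientation from \eqref{oriented-basis} is sorted out) gives exactly $a_{\phi,\lambda}(x)+\int_{[e,x]}\lambda-\int_{[e,\phi(x)]}\lambda$, as required. So your ``correction terms'' do not arise from any reparametrization mismatch; they come out directly once the pullback is handled on the correct segment.
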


\begin{proof}Let $H_t$ be the time-dependent compactly supported Hamiltonian that defines the isotopy $\phi^t$ and denote by $X_t$ its Hamiltonian vector field. We lift the differential forms $\lambda$ and $\omega_0$ and the vector field $X_t$ to corresponding objects on the three-manifold $\D\times [0,1]$:
\begin{align*}
    \Tilde{\lambda}&:=\lambda+H_t dt, \\
    \Tilde{\omega}_0 &:=d\tilde{\lambda}=\omega_0+dH_t\wedge dt, \\
    \tilde{X}&:=X_t+\partial_t.
\end{align*}
Note that $\tilde{X}$ is an autonomous vector field on $\D\times [0,1]$ and
\begin{align}
    \imath_{\tilde{X}}{\Tilde{\omega}_{0}} &= \imath_{(X_t+\partial_t)}[ \omega_0+dH_t \wedge dt)]\nonumber\\
                    &= \imath_{X_t}\omega_0+\imath_{X_t}(dH_t \wedge dt)+\imath_{\partial_t}\omega_0+\imath_{\partial_t}(dH_t \wedge dt)\nonumber\\
                    &=dH_t+0 + 0 - dH_t = 0.\label{E:flow invariant}
\end{align}
In particular, using also the fact that $\Tilde{\omega}_0$ is closed, we have
\[
L_{\tilde{X}} \Tilde{\omega}_0 = d \imath_{\tilde{X}} \Tilde{\omega}_0  + \imath_{\tilde{X}} d \Tilde{\omega}_0  = 0,
\]
and hence $\Tilde{\omega}_0$ is invariant with respect to the local flow of $\tilde{X}$, which we denote by $\tilde{\phi}$. Note that
\[
\tilde{\phi}^t(z,0) = (\phi^t(z),t) \qquad \forall z\in \D, \; t\in [0,1].
\]
{Note that (\ref{E:flow invariant}) together with the fact that $\tilde{\omega}_0$ is closed, also shows that $\tilde{\omega}_0$ is invariant with respect to any reparametrization of this local flow.}
The map 
\[
\rho: S^e_\phi(x) \rightarrow \D, \qquad (z,t) \mapsto (\phi^t)^{-1}(z),
\]
from Section \ref{wiidd} maps each $(z,t)\in S^e_\phi(x)$ into the unique point $y\in \D$ such that $(y,0)$ is on the backward orbit of $(z,t)$ by $\tilde{\phi}$. In other words, this map has the form
\begin{equation*}
    \rho(p)=\pi \circ \tilde{\phi}^{-\tau(p)}(p),
\end{equation*}
where $\pi: \D \times [0,1]\rightarrow \D$ and $\tau: \D \times [0,1] \rightarrow [0,1]$ are the two projections. 
We set $\psi(p) := \tilde{\phi}^{-\tau(p)}(p)$ and we compute its differential
\[
d\psi(p)[u]=d\tilde{\phi}^{-\tau(p)}(p)[u]- d\tau(p)[u]\tilde{X}(\tilde{\phi}^{-\tau(p)}(p)), \qquad \forall u\in T_p S^e_\phi(x).
\]
Notice that $\rho=\pi\circ \psi$ and $\pi^*\omega_0=\tilde{\omega}_0-dH_t\wedge dt$. These two identities let us compute the pullback of the form $\omega_0$ by the map $\rho$ at some point $p\in S^e_{\phi}(x)$: for every $u,v\in T_p S^e_{\phi}(x)$ we have
\begin{align*}
    \rho^*\omega_0(p)[u,v]  &=(\psi^*\pi^*\omega_0)(p)[u,v]\\
                            &=\psi^*(\tilde{\omega}_0-dH_t\wedge dt)(p)[u,v]\\
                            &=\psi^*\tilde{\omega}_0(p)[u,v]-\psi^*(dH_t\wedge dt)(p)[u,v]\\
                            &=\tilde{\omega}_0(\psi(p))\big[d\psi(p)[u],d\psi(p)[v]\big]{-}dH_0(\psi(p)) \wedge dt \big[d\psi(p)[u],d\psi(p)[v]\big].
\end{align*}
The fact that the flow $\tilde{\phi}$ and reparametrizations of it preserve the form $\Tilde{\omega_0}$ implies that $\psi^*\Tilde{\omega}_0 = \Tilde{\omega}_0$, and hence the first term in the last expression equals $\tilde{\omega}_0(p)[u,v]$. On the other hand, the second term in the last expression vanishes, because the vectors $d\psi(p)[u]$ and $d\psi(p)[v]$ belong to the space $T_{\psi(p)}\D \times \{0\}$, on which $dt$ vanishes. We conclude that
\[
\rho^*\omega_0=\tilde{\omega}_{0} |_{S^e_\phi(x)}.  
\]
By Stokes' theorem and (\ref{actionH}) we obtain the identity
\begin{equation}{\label{1}}
\begin{split}
\int_{S^e_\phi(x)}\rho^*\omega_0 &=\int_{S^e_\phi(x)}\tilde{\omega}_0 = \int_{\partial S^e_\phi(x)}\tilde{\lambda}\\ &= \int_{\partial S^e_\phi(x)} (\lambda + H_t \, dt) = a_{\phi,\lambda}(x)+\int_{[e,x]}\lambda-\int_{[e,\phi(x)]}\lambda. \end{split}
\end{equation}
On the other hand, equation (\ref{degreeProperty}) and Lemma \ref{transverseregular} yield
\begin{equation}{\label{2}}
\int_{S^e_\phi(x)}\rho^*\omega_0=\int_\D\deg(\rho,S^e_\phi(x),y)\, \omega_0(y)=\int_\D I_\phi^e(x,y)\, \omega_0(y).
\end{equation}
The desired identity (\ref{actionvsInt}) follows from  (\ref{1}) and (\ref{2}).

\end{proof}

\begin{rem}
In the particular case of a fixed point $x\in \D$ of $\phi$, the formula of Proposition \ref{actionvsintersection} reduces to the identity
\[
a_{\phi,\lambda}(x)=\int_\D I_\phi^e(x,y)\omega_0(y).
\]

\end{rem}

We can finally prove Theorem \ref{main-thm} from the Introduction.

\begin{proof}[Proof of Theorem \ref{main-thm}]
By applying Proposition \ref{actionvsintersection} to the map $\phi^n$ we obtain the identity
\begin{equation*}
    \frac{a_{\phi^n,\lambda}(x)}{n}=\frac{1}{n}\int_\D I_{\phi^n}^{e}(x,y)\, \omega_0(y)-\frac{1}{n}\int_{[e,x]}\lambda+\frac{1}{n}\int_{[e,\phi^n(x)]}\lambda.
\end{equation*}
for every $x$ in $\D$. The last two integrals are uniformly bounded in $x\in \D$ and $n\in \mathbb{N}$. Together with the bound from Proposition \ref{Bound1} applied to $\phi^n$, we deduce the bound
\begin{equation}
\label{est}
    \frac{a_{\phi^n,\lambda}(x)}{n}-\frac{1}{n}\int_\D W_{\phi^n}(x,y)\, \omega_0(y) = O \left( \frac{1}{n} \right)
\end{equation}
uniformly in $x\in \D$. The set $\Omega$ consisting of all points $x\in \D$ for which the sequence $a_{\phi,\lambda}(x)/n$ converges to $a_\phi^{\infty}(x)$ and the integrals of $W_{\phi^n}(x,\cdot) \omega_0/n$ converge to the integral of $W_\phi^{\infty}(x,\cdot)$ has full measure in $\D$. Taking the limit in (\ref{est}) we obtain the identity
\begin{equation}
\label{last}
a_\phi^{\infty}(x) = \int_\D W_\phi^{\infty}(x,y)\, \omega_0(y)
\end{equation}
for every $x\in \Omega$.   The set $\Omega$ contains all periodic points of $\phi$ thanks to the identity (\ref{actionPeriodic}) and Remark \ref{periodic}.
\end{proof}

Fathi's formula stated as Corollary \ref{cor:fathi} in the Introduction now follows by integrating (\ref{last}) over the full measure set $\Omega$ and using the identities (\ref{BirkAction}) and (\ref{intW}).


\end{document}